\documentclass{article}

\usepackage{amsmath}
\usepackage{amsthm}
\usepackage{amssymb}

\newtheorem{theorem}{Theorem}
\newtheorem*{maintheorem}{Theorem A}
\newtheorem*{corollaryb}{Corollary B}
\newtheorem*{corollaryc}{Corollary C}
\newtheorem{prop}{Proposition}
\newtheorem{lemma}{Lemma}
\newtheorem{claim}{Claim}
\newtheorem{corollary}{Corollary}
\theoremstyle{definition}
\newtheorem{definition}{Definition}
\newtheorem{remark}{Remark}

\newcommand{\R}{\mathbb{R}}
\newcommand{\Z}{\mathbb{Z}}
\newcommand{\C}{\mathbb{C}}
\newcommand{\tX}{\widetilde{X}}
\newcommand{\ded}[1]{(\!(#1)\!)}
\newcommand{\bigded}[1]{\left(\!\!\left(#1\right)\!\!\right)}

\DeclareMathOperator{\Id}{Id}
\DeclareMathOperator{\sign}{sign}

\DeclareMathOperator{\Aut}{Aut}
\DeclareMathOperator{\Span}{Span}
\DeclareMathOperator{\diag}{diag}
\DeclareMathOperator{\Rm}{Rm}

\begin{document}

\title{Quotients of gravitational instantons}
\author{Evan P. Wright\footnotemark \\ \small Mathematics Department \\ \small Stony Brook University, Stony Brook NY, 11794}
\footnotetext[1]{evanpw@math.sunysb.edu}
\date{}
\maketitle

\begin{abstract}
A classification result for Ricci-flat anti-self-dual asymptotically locally Euclidean 4-manifolds is obtained: they are either hyperk\"ahler (one of the gravitational instantons classified by Kronheimer), or they are a cyclic quotient of a Gibbons-Hawking space. The possible quotients are described in terms of the monopole set in $\mathbb{R}^3$, and it is proved that every such quotient is actually K\"ahler. The fact that the Gibbons-Hawking spaces are the only gravitational instantons to admit isometric quotients is proved by examining the possible fundamental groups at infinity: most can be ruled out by the classification of 3-dimensional spherical space form groups, and the rest are excluded by a computation of the Rohklin invariant (in one case) or the eta invariant (in the remaining family of cases) of the corresponding space forms.
\end{abstract}

\section{Introduction}

In proving a compactness theorem for some family of Riemannian manifolds, the method of proof is often the same: show that the only possible non-compactness occurs through some sort of ``bubbling'', and then using the geometry of the situation, rule out every possible bubble. When the family consists of anti-self-dual 4-manifolds with bounded Ricci curvature (K\"ahler-Einstein surfaces, for example), the possible bubbles are Ricci-flat, anti-self-dual, and asymptotically locally Euclidean (ALE).

The simply-connected such examples were classified by Kronheimer \cite{Kronheimer1}, \cite{Kronheimer2}, but even when examining a family of metrics on a fixed, simply-connected manifold, it is difficult to rule out non-simply-connected bubbles a priori. Moreover, if one wants to rule out bubbling by arguing that not enough curvature accumulates to form a bubble, then the complete list of non-simply-connected examples is needed, since quotients will have less total curvature than their covers.

To this end, we prove the following classification result:

\begin{maintheorem}
Every Ricci-flat anti-self-dual ALE 4-manifold $X$ which is not simply-connected and not flat is a finite isometric quotient of a Gibbons-Hawking space $\tX$, and is actually K\"ahler. Moreover, if the monopole set $F \subset \R^3$ of $\tX$ is normalized to have Euclidean center of mass at the origin, then the isometric quotients of $\tX$ are in one-to-one correspondence with the cyclic subgroups of $SO(3)$ which preserve $F$ and act freely on it.
\end{maintheorem}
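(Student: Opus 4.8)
The plan is to reduce everything to Kronheimer's classification by passing to the universal cover $\tX$ of $X$. Anti-self-duality and Ricci-flatness are local conditions, so they lift to $\tX$, and the end of $\tX$ is a finite cover of the end of $X$ and hence still ALE; thus $\tX$ is a simply-connected Ricci-flat anti-self-dual ALE $4$-manifold. By Kronheimer's theorem it is one of the hyperk\"ahler gravitational instantons, the minimal resolution of $\C^2/\Gamma$ for a finite $\Gamma \subset SU(2)$ of ADE type, and the hypothesis that $X$ is not flat rules out $\Gamma = 1$ (which would force $\tX \cong \R^4$ and make $X$ a flat space form). Writing $G$ for the deck group, we have $X = \tX/G$ with $G$ finite, acting freely by orientation-preserving isometries, so the group at infinity $\Gamma'$ of $X$ fits into an extension $1 \to \Gamma \to \Gamma' \to G \to 1$ inside $SO(4)$ with $\Gamma'$ acting freely on $S^3$; thus $S^3/\Gamma'$ is a spherical space form.

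The organizing tool is the homomorphism $\rho\colon \mathrm{Isom}^+(\tX) \to SO(3)$ recording the action on the three-dimensional space $\Span(\omega_1,\omega_2,\omega_3)$ of parallel self-dual $2$-forms. If $\rho(G) = 1$ then $G$ preserves every complex structure, the hyperk\"ahler structure descends, and $X$ is then a hyperk\"ahler ALE $4$-manifold; but every such manifold is known to be simply-connected (a resolution of $\C^2/\Gamma'$ with $\Gamma' \subset SU(2)$), contradicting the hypothesis, so $\rho(G) \neq 1$. Conversely, once one knows that $\rho(G)$ is cyclic, the K\"ahler claim is immediate: a finite subgroup of $SO(3)$ is cyclic precisely when it fixes a point of $S^2$, and the complex structure corresponding to such a fixed axis is $G$-invariant and descends, making $X$ K\"ahler (it is Ricci-flat K\"ahler locally). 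The real content is therefore to prove that $\Gamma$ is cyclic --- that is, $\tX$ is of type $A$ --- and that $\rho(G)$ is cyclic.

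The main obstacle, and the technical core, is excluding the $D_k$ and $E_n$ covers together with the non-cyclic extensions. I would attack this through the boundary group $\Gamma' \subset SO(4)$. Writing $SO(4) = \bigl(SU(2)_+ \times SU(2)_-\bigr)/\{\pm 1\}$, the factor $SU(2)_-$ carries the holonomy and contains $\Gamma$, while $\rho$ is the composite $\Gamma' \to SU(2)_+ \to SO(3)$; freeness of the $\Gamma'$-action on $S^3$ translates into the condition that no nontrivial element of $\Gamma'$ has equal-trace components. The classification of $3$-dimensional spherical space form groups then eliminates most candidate $\Gamma'$ outright. For the finitely many surviving families I would invoke the Atiyah-Patodi-Singer index theorem on $X$: Gauss-Bonnet and the signature formula express the integers $\chi(X)$ and $\tau(X)$ through the curvature integrals $\int_X |\Rm|^2$ and $\int_X |W^-|^2$ --- which are $|G|^{-1}$ times those of $\tX$ --- together with boundary contributions from $S^3/\Gamma'$, notably its $\eta$-invariant. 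Because $X$ is Ricci-flat and anti-self-dual these reduce to rigid integrality constraints, violated by the remaining exceptional family (ruled out by the $\eta$-invariant computation) and by one further case (ruled out by its Rohklin invariant), leaving exactly the cases with $\Gamma$ and $\rho(G)$ cyclic, i.e. $\tX$ a Gibbons-Hawking space.

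Finally, for the ``moreover'' I would use the explicit Gibbons-Hawking description of $\tX$ as an $S^1$-bundle over $\R^3 \setminus F$ with curvature $\ast\, dV$, where $V = \sum_{p \in F} \tfrac{1}{2}|x-p|^{-1}$ and the three K\"ahler forms are aligned with the coordinate directions of $\R^3$. An isometry covering a rotation $A \in SO(3)$ acts on $\Span(\omega_1,\omega_2,\omega_3)$ by $A$, so $\rho$ sends it to $A$, and preserving the metric forces $A$ to preserve $F$; centering $F$ makes $A$ a genuine rotation fixing the origin. A triholomorphic isometry preserves the hyperk\"ahler moment map and is therefore vertical, hence lies in the circle, and every nontrivial circle element fixes the points of $\tX$ over $F$ where the fiber collapses; so no nontrivial triholomorphic element acts freely, $\rho|_G$ is injective, and $G \cong \rho(G)$. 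Examining the fixed points of a lift $\hat{A}$ shows they can only occur over the rotation axis of $A$: over a monopole lying on that axis $\hat{A}$ fixes the collapsed fiber, whereas away from $F$ the fiber rotation can be arranged to be nontrivial, so $\hat{A}$ acts freely exactly when $A$ moves every point of $F$. Assembling these facts identifies the free isometric quotients of $\tX$ with the cyclic subgroups of $SO(3)$ preserving $F$ and acting freely on it, each quotient being K\"ahler by the fixed-axis argument above.
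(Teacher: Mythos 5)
Your outline reproduces the paper's strategy: pass to the universal cover and invoke Kronheimer, set up the extension of Proposition~\ref{P:ExactSequence}, eliminate non-cyclic $\pi_1^\infty(\tX)$ via the space-form classification together with $\eta$- and Rokhlin-invariant computations, and analyze Gibbons--Hawking quotients in the cyclic case. Two caveats before the main criticism: the existence of the extension itself requires showing that the preimage in $\tX$ of the end of $X$ is connected (the paper deduces this from one-endedness via the splitting theorem), and your third paragraph is a plan rather than a proof, since none of the group-theoretic eliminations or the Dedekind-sum/Rokhlin computations are carried out. But the genuine gaps are two steps in the quotient analysis, and they carry essentially the entire second half of the theorem.

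First, the claim that ``a triholomorphic isometry preserves the hyperk\"ahler moment map and is therefore vertical, hence lies in the circle'' is circular, and in one case false. Since $d\pi_i = \omega_i(\partial_\theta, \cdot)$ and a triholomorphic $f$ satisfies $f^*\omega_i = \omega_i$, one has $f^* d\pi_i = \omega_i((f^{-1})_*\partial_\theta, \cdot)$, so $f$ preserves the moment map (up to a constant) if and only if $f_*\partial_\theta = \partial_\theta$ --- which is exactly what you are trying to prove. Moreover, for $k = 2$ (Eguchi--Hanson) the full triholomorphic isometry group is $SO(3)$, which certainly does not preserve the fibers of $\pi$; and even for $k > 2$ the kernel of $\rho$ need not lie in the circle (by the paper's own sign computation $f^* d\vec{x} = \pm\rho(f)\, d\vec{x}$, a fiber-reversing isometry covering $-\Id$ of a centrally symmetric monopole set is triholomorphic). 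The paper closes this hole with real work: Proposition~\ref{P:Curves} (every $(-2)$-curve of a parallel complex structure is $\pi^{-1}$ of a segment between monopoles) yields Corollary~\ref{C:Triholomorphic} (for $k > 2$ the \emph{identity component} of the triholomorphic group is $U(1)$); any isometry normalizes this $U(1)$, giving $f_*\partial_\theta = \pm\partial_\theta$; the sign $-$ is excluded because an orientation-reversing homeomorphism of the circle fiber over a fixed point of $\hat{f}$ would have a fixed point; and $k = 2$ is handled separately ($G$ preserves the exceptional sphere and acts freely on it, forcing $G \cong \Z_2$).

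Second, you never prove that $G$ (equivalently $\rho(G)$) is cyclic, and neither of the tools you invoke can force it. Acting freely on $F$ does not imply cyclic: the tetrahedral group $A_4 < SO(3)$ acts freely on a generic orbit $F$ of twelve points, whose center of mass is automatically the origin, so the correspondence your argument actually delivers would manufacture a free $A_4$-quotient of the $12$-monopole Gibbons--Hawking space, contradicting the theorem. Nor does the boundary group theory supply cyclicity: $\Z_2 \triangleleft T^*$ with quotient $A_4$ is a legitimate space-form extension, so a hypothetical $A_4$-quotient of Eguchi--Hanson passes any filter at infinity; it must be excluded geometrically. The missing idea is the paper's lifting argument: the isometries of $\tX$ covering a finite $G < SO(3)$ form a group $\Aut(\tX, G) \cong S^1 \times G$, split by the action on the circle fiber over the origin, so a lift of $G$ is precisely a homomorphism $G \to S^1$, and freeness forces this homomorphism to be injective --- any $g$ in its kernel fixes that fiber pointwise, even though it moves every monopole (this also shows your per-element claim ``$\hat{A}$ acts freely exactly when $A$ moves every point of $F$'' fails: in a group action the fiber rotations cannot be ``arranged'' independently). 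Injectivity into $S^1$ is what makes $G$ cyclic, what makes the free lift unique up to an automorphism of $G$ (the ``one-to-one'' in the statement), and, via the invariant rotation axis, what produces the invariant complex structure proving the quotient is K\"ahler. Without it, the cyclicity, the correspondence, and the K\"ahler claim are all unproved.
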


\begin{remark}
The fact that every Ricci-flat \emph{K\"ahler} ALE 4-manifold is either hyperk\"ahler (one of Kronheimer's examples) or a cyclic quotient of a Gibbons-Hawking space was stated without proof by Nakajima \cite{Nakajima}, and a proof of that same fact has recently been given by Ioana Suvaina. The difference with our theorem is that we do not need to assume a priori that the manifold is K\"ahler; it turns out during the proof that every isometric quotient preserves some parallel complex structure. In fact, Nakajima conjectures that the only simply-connected Ricci-flat ALE 4-manifolds are Kronheimer's hyperk\"ahler manifolds, which together with Theorem~A would imply that \emph{every} Ricci-flat ALE 4-manifold was K\"ahler.
\end{remark}

From the main theorem, the following corollaries follow quickly (they will be proved in Section~\ref{S:Consequences}):

\begin{corollaryb}
If $X$ is a Ricci-flat anti-self-dual ALE 4-manifold which is not flat, then
\[
\int_X |\Rm|^2 d\mu \geq
\begin{cases}
6\pi^2 & \text{if } b_2(X) = 0\\
8\pi^2 \left(b_2(X) + 1 - \frac{1}{b_2(X) + 1} \right) & \text{if } b_2(X) > 0.
\end{cases}
\]
In particular, $\int_X |\Rm|^2 d\mu \geq 6\pi^2$ in all cases.
\end{corollaryb}

\begin{corollaryc}
If $X$ is a scalar-flat K\"ahler ALE 4-manifold (of any order $\tau > 0$) and $b_2(X) = 0$, then $X$ is actually Ricci-flat (see the comments before Lemma~18 in \cite{CLW} for another proof of this fact), and is a quotient of a Gibbons-Hawking space $\tX$ by a cyclic group of order $\chi(\tX)$.
\end{corollaryc}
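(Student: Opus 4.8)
The plan is to deduce Corollary~C from Theorem~A, the only real work being to promote ``scalar-flat'' to ``Ricci-flat.'' First I would record the standard algebraic fact that on a K\"ahler surface the self-dual Weyl curvature is completely determined by the scalar curvature: in an orthonormal frame of $\Lambda^+$ adapted to the K\"ahler form one has $W^+=\tfrac{s}{12}\diag(2,-1,-1)$, so $s\equiv 0$ forces $W^+\equiv 0$. Thus $X$ is anti-self-dual, and Theorem~A will apply the moment Ricci-flatness is in hand.

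To obtain Ricci-flatness I would argue with the Ricci form $\rho$. On a K\"ahler manifold $\rho$ is closed, its codifferential is proportional (through the complex structure) to $ds$ and hence vanishes when $s\equiv 0$, and its self-dual part is $\tfrac{s}{4}\omega$; so under scalar-flatness $\rho$ is a harmonic, anti-self-dual $2$-form. The crucial analytic input is decay: since $X$ is ALE of order $\tau>0$, the curvature, and therefore $\rho$, decay like $r^{-\tau-2}$, so $\int_X|\rho|^2\,d\mu$ is comparable to $\int^\infty r^{-2\tau-1}\,dr$, which converges for every $\tau>0$. Hence $\rho$ is an $L^2$ harmonic $2$-form. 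Invoking the identification of the space of $L^2$ harmonic $2$-forms on an ALE $4$-manifold with the image of $H^2_c(X)\to H^2(X)$, a space of dimension $b_2(X)$, and using $b_2(X)=0$, I conclude that this space is trivial, so $\rho\equiv 0$ and $X$ is Ricci-flat. This is the one genuinely analytic step, the delicate feature being that $\tau>0$ only just suffices for the $L^2$ integral to converge; it is also the statement for which \cite{CLW} supplies an independent proof.

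With $X$ now Ricci-flat, anti-self-dual, and ALE, I would feed it into Theorem~A. The flat case is degenerate: the only flat ALE $4$-manifold is $\R^4$, which is itself the single-monopole Gibbons-Hawking space with $\chi=1$ and trivial quotient. If $X$ were simply-connected and not flat it would be one of Kronheimer's hyperk\"ahler ALE spaces, all of which have $b_2>0$, contradicting $b_2(X)=0$. So in the essential case $X$ is neither flat nor simply-connected, and Theorem~A realizes it as $X=\tX/\Gamma$ with $\Gamma\subset SO(3)$ cyclic and acting freely on the monopole set $F$ of $\tX$.

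It remains to compute $|\Gamma|$. Because $\Gamma$ is finite, $\pi_1(X)=\Gamma$ gives $b_1(X)=0$, while $b_4(X)=0$ by noncompactness and $b_3(X)=\dim H^3(\tX;\R)^\Gamma=0$ since the Gibbons-Hawking space has $b_3(\tX)=0$; together with $b_2(X)=0$ this gives $\chi(X)=1$. As $\Gamma$ acts freely, the Euler characteristic is multiplicative, $\chi(\tX)=|\Gamma|\,\chi(X)=|\Gamma|$, so $\Gamma$ is cyclic of order $\chi(\tX)$, which is the assertion. (Concretely, $\Gamma$ permutes the $\chi(\tX)$ points of $F$ freely in $\chi(\tX)/|\Gamma|$ orbits, and $b_2(X)=\chi(\tX)/|\Gamma|-1=0$ again forces $|\Gamma|=\chi(\tX)$.) The only real obstacle is the Ricci-flatness step; everything after it is bookkeeping built on Theorem~A.
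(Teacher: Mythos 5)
Your proof is correct and takes essentially the same route as the paper: scalar-flatness plus the K\"ahler condition makes the Ricci form a harmonic, square-integrable $2$-form, the identification of $L^2$ harmonic forms on an ALE $4$-manifold with (the image of) compactly supported cohomology forces $\rho \equiv 0$ when $b_2(X)=0$, and then Theorem~A together with Euler-characteristic bookkeeping ($\chi(X) = 1$, so $|\pi_1(X)| = \chi(\widetilde{X})$) gives the quotient statement. The additional details you supply --- the $W^+ = \frac{s}{12}\diag(2,-1,-1)$ computation establishing anti-self-duality, the explicit decay estimate showing $\tau>0$ suffices for $L^2$, and the handling of the flat and simply-connected edge cases before invoking Theorem~A --- are points the paper leaves implicit, but they do not alter the argument.
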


\section{ALE Spaces and Gravitational Instantons}

We say that a Riemannian manifold $(M^4, g)$ is ALE of order $\tau > 0$ if
\begin{itemize}
\item[(i)]{there is a compact set $C \subset M$ such that $M \setminus C$ is diffeomorphic to $(\R^4 \setminus B) / \Gamma$, where $B$ is some closed ball centered at the origin of $\R^4$, and $\Gamma$ is a finite subgroup of $SO(4)$ (well-defined up to conjugacy) which acts freely on the 3-sphere; and}
\item[(ii)]{$g$ approximates the Euclidean metric in the sense that the pullback of the metric on $M \setminus C$ to the cover $\R^4 \setminus B$ satisfies
\[
\partial^{\alpha} (g_{ij} - \delta_{ij}) = O(r^{-\tau-|\alpha|}) \quad \mbox{for all multi-indices } \alpha,
\]
where $|\alpha|$ denotes the length of the multi-index, and $r$ is the the function given by the distance to 0 in the Euclidean metric.}
\end{itemize}
The group $\Gamma$ is called the fundamental group at infinity of $M$, and is denoted by $\pi_1^\infty(M)$. This is the fundamental group of the space form $S^3 / \Gamma$ which lies at the ``boundary'' of $M$.

\begin{definition} \label{D:Bubble}
For brevity, we will call a Ricci-flat anti-self-dual ALE 4-manifold a \emph{bubble}.
\end{definition}

\begin{remark}
A theorem of Bando, Kasue, and Nakajima \cite{BKN} implies that every bubble is ALE of order 4 in suitably-chosen coordinates. Therefore, we will assume that coordinates at infinity are always of this order.
\end{remark}

On a real 4-manifold, there is locally no distinction between Ricci-flat anti-self-dual metrics, Ricci-flat K\"ahler metrics, and hyperk\"ahler metrics: all three are locally characterized by having their restricted holonomy in $SU(2) \cong Sp(1)$. However, globally we have strict containment:
\begin{equation*}
\{\mbox{hyperk\"ahler}\} \subset \{\mbox{Ricci-flat K\"ahler}\} \subset \{\mbox{Ricci-flat anti-self-dual}\}.
\end{equation*}
In particular, a Ricci-flat K\"ahler manifold must additionally have global holonomy in $U(2)$, and a hyperk\"ahler manifold must have global holonomy in $SU(2)$. For example, a K\"ahler-Einstein K3 surface is hyperk\"ahler, a K\"ahler-Einstein Enriques surface is only Ricci-flat K\"ahler, and an isometric quotient of a K\"ahler-Einstein Enriques surface by an antiholomorphic involution is only Ricci-flat anti-self-dual.

Those bubbles which are hyperk\"ahler (which includes all simply-connected bubbles) are also known as \emph{gravitational instantons}, and they have all been classified by Kronheimer \cite{Kronheimer1}, \cite{Kronheimer2}. In particular, every hyperk\"ahler bubble $M$ is diffeomorphic to the minimal resolution of $\C^2 / \Gamma$ for some finite subgroup $\Gamma < SU(2)$, and every such $\Gamma$ occurs. The preimage of the singular point $0$ under the resolution map $M \to \C^2 / \Gamma$ is a collection of $(-2)$-curves which intersect transversely in a pattern given by a certain Dynkin diagram associated to $\Gamma$ (see \cite{McKay}, \cite{Kronheimer1} for details). It follows that $H_2(M, \Z)$ is freely generated by the $(-2)$-curves, and so the intersection form is negative definite. Moreover, every gravitational instanton actually turns out to be simply connected. Some of the invariants of these manifolds that we will need are collected in Table~\ref{Table:Kronheimer}.

\begin{table}[ht] \caption{Invariants of gravitational instantons}
\centering
\begin{tabular}{|c|c|c|c|c|c|}
\hline
Dynkin diagram & $\Gamma$ & $|\Gamma|$ & $\chi$ & $\tau$ \\
\hline
\hline
$A_{k - 1}$ & $\Z_k$ & $k$ & $k$ & $-k + 1$ \\
$D_{k + 2}$ & $D_{4k}^*$ & $4k$ & $k + 3$ & $-k - 2$\\
$E_6$ & $T^*$ & $24$ & $7$ & $-6$ \\
$E_7$ & $O^*$ & $48$ & $8$ & $-7$ \\
$E_8$ & $I^*$ & $120$ & $9$ & $-8$ \\
\hline
\end{tabular} \label{Table:Kronheimer}
\end{table}

\subsection{Subgroups of $SU(2)$}

Because $SU(2)$ is isomorphic to $Spin(3) \cong S^3$, there is a spin double covering $\phi: SU(2) \to SO(3)$. This homomorphism is just the quotient by the center $Z(SU(2)) = \{\pm \Id\}$. From this, we can read off the finite subgroups of $SU(2)$. The subgroups which don't contain $-\Id$ must have odd order, and turn out to all be cyclic. Every other subgroup of $SU(2)$ maps $2$-to-$1$ onto a subgroup in $SO(3)$, which are all known: they are cyclic, dihedral ($D_{2n}$), tetrahedral ($T \cong A_4$), octahedral ($O \cong S_4$), or icosahedral ($I \cong A_5$). The preimage of every cyclic group is cyclic, and the preimages of the latter four cases are called binary dihedral ($D_{4n}^*$), binary tetrahedral ($T^*$), binary octahedral ($O^*$), and binary icosahedral ($T^*$).

\section{Setup and strategy}

Let $X$ be a bubble which is not simply-connected and not flat. By the splitting theorem, it follows that the fundamental group of $X$ is finite, and that $X$ has one end. By the characterization of Bando, Kasue, and Nakajima \cite{BKN}, the universal cover $\tX$ is also ALE, but since it is simply-connected, it must be hyperk\"ahler and thus is one of the gravitational instantons constructed by Kronheimer. Let $p: \tX \to X$ be the corresponding covering map.

\begin{remark} \label{R:Cohomology}
From the corresponding properties for $\tX$ and the injectivity of $p^*$ on real cohomology, we know that $b_1(X) = b_3(X) = b_4(X) = 0$, so $\chi(X) = 1 + b_2(X)$. Moreover, since $\tX$ has negative-definite intersection form, so does $X$, and thus $\tau(X) = -b_2(X) = 1 - \chi(X)$.
\end{remark}

By the theorems of Kronheimer \cite[theorems 1.2 and 1.3]{Kronheimer2} (see also \cite{Polygons}), the universal cover $\tX$ is Gibbons-Hawking if and only if the fundamental group at infinity is cyclic. Thus, in order to prove the first part of Theorem~A, we seek to rule out all other possibilities for $\pi_1^\infty(\tX)$. The main tool is the following:

\begin{prop} \label{P:ExactSequence}
There are maps $\pi^\infty_1(\tX) \to \pi^\infty_1(X)$ and $\pi^\infty_1(X) \to \pi_1(X)$ so that the sequence
\begin{equation} \label{E:ExactSequence}
1 \to \pi^\infty_1(\tX) \to \pi^\infty_1(X) \to \pi_1(X) \to 1
\end{equation}
is exact.
\end{prop}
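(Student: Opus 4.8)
The plan is to realize the entire sequence as the restriction of the universal covering $p \colon \tX \to X$ to the end of $X$, and then to read off exactness from elementary covering-space theory applied to the pair (end, total space). First I would fix a large compact set $C \subset X$ so that $E_X := X \setminus C$ is the standard ALE collar, hence homotopy equivalent to the space form $S^3/\pi_1^\infty(X)$, giving $\pi_1(E_X) \cong \pi_1^\infty(X)$. Because $\pi_1(X)$ is finite, $p$ is a finite covering, so $p^{-1}(C)$ is compact and $p^{-1}(E_X) = \tX \setminus p^{-1}(C)$ is a neighborhood of infinity in $\tX$. Each of its components is noncompact (a finite cover of the connected noncompact $E_X$ can have no compact component), so each lies in an end of $\tX$. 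Since $\tX$ is one of Kronheimer's gravitational instantons it has a single end, so for $C$ large enough $p^{-1}(E_X)$ is connected; it is then the standard end of $\tX$, with $\pi_1\bigl(p^{-1}(E_X)\bigr) \cong \pi_1^\infty(\tX)$. Write $E_{\tX} := p^{-1}(E_X)$, so that restricting $p$ yields a connected covering $q \colon E_{\tX} \to E_X$.

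The two maps are then the obvious ones: $\pi_1^\infty(\tX) \to \pi_1^\infty(X)$ is $q_*$, while $\pi_1^\infty(X) \to \pi_1(X)$ is the map $\phi$ induced by the inclusion $E_X \hookrightarrow X$. Exactness at the left is immediate, since $q$ is a covering map and hence $q_*$ is injective. For exactness at $\pi_1(X)$ I would use that the number of components of $p^{-1}(E_X)$ equals $[\pi_1(X) : \operatorname{Im}\phi]$: the monodromy of the universal cover acts on the fiber $\cong \pi_1(X)$ through $\phi$ followed by the regular action, so the components correspond to the cosets of $\operatorname{Im}\phi$. Connectedness of $E_{\tX} = p^{-1}(E_X)$ therefore forces $\phi$ to be surjective.

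The key point is exactness in the middle, namely $\operatorname{Im}(q_*) = \ker(\phi)$. Here I would argue directly with lifts. A loop $\gamma$ in $E_X$ lies in $\operatorname{Im}(q_*)$ exactly when it lifts to a loop in $E_{\tX} = p^{-1}(E_X)$. Since the lift of a loop contained in $E_X$ automatically stays inside $p^{-1}(E_X)$, this happens precisely when $\gamma$ lifts to a loop in the \emph{universal} cover $\tX$, which by the standard criterion is exactly the condition $\phi([\gamma]) = 1$. Hence $\operatorname{Im}(q_*) = \ker(\phi)$, and the sequence is exact.

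I expect the only real subtlety to be the topological bookkeeping of the first paragraph, that is, confirming that the preimage of the end is genuinely the connected standard end of $\tX$ rather than a disjoint union; everything afterward is a direct application of the covering-space dictionary. That connectedness is precisely where the hypothesis that $\tX$ has a single end enters, and it is also exactly what forces $\phi$ to be surjective.
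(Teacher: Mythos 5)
Your proposal is correct and follows essentially the same route as the paper: both restrict the universal covering to the ALE end, use the single end of $\tX$ to show that $p^{-1}(X \setminus C)$ is connected, and then extract the exact sequence from elementary covering-space theory (the paper via the deck-group description of the connected normal covering, you via the lifting criterion and orbit counting, which amounts to the same thing). One small point to sharpen in the connectedness step: the ends argument requires that no component of $p^{-1}(E_X)$ has \emph{compact closure} (not merely that no component is compact, since a noncompact open set can still be precompact), and this follows from your own surjectivity observation, because a component with compact closure would have compact image containing $E_X$, while $\overline{E_X}$ cannot be compact (else $X = C \cup \overline{E_X}$ would be compact).
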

\begin{proof}
Let $C$ be a compact subset of $X$ such that $X \setminus C$ is diffeomorphic to $(\R^4 \setminus B) / \Gamma_X$ for some subgroup $\Gamma_X \leq SO(4)$, where $B$ is a closed ball in $\R^4$. Now, the restriction of the covering map $p: \tX \to X$ to the set $p^{-1}(X \setminus C)$ is a normal covering of $X \setminus C$ with the same deck group, and we wish to show that it is connected.

Since $X \setminus C$ is a path-connected manifold, every component of a covering map must surject. (This is because every component is some quotient of a disjoint union of copies of the universal cover \cite[p. 69]{Hatcher}). Thus, no component of $p^{-1}(X \setminus C)$ can have compact closure in $\tX$. But we know that $\tX$ has only one end by the splitting theorem, and $p^{-1}(X \setminus C) = \tX \setminus p^{-1}(C)$ is the complement of a compact set, since finite coverings are proper. Thus, there can only be one component, since each one would correspond to a different end.

Now, since $\tX \setminus p^{-1}(C)$ is a connected covering space of $X \setminus C \cong (\R^4 \setminus B) / \Gamma_X$, it must also be of the form $(\R^4 \setminus B) / \Gamma$, and this $\Gamma$ must be the fundamental group at infinity of $\tX$. Then the map of fundamental groups induced by the restriction of $p$ identifies $\pi_1^\infty(\tX)$ with a normal subgroup of $\pi_1(X \setminus C) \cong \pi_1^\infty(X)$. Moreover, the quotient of $\pi_1(X \setminus C)$ by $\pi_1^\infty(\tX)$ is identified with the deck group of $p$, which is just $\pi_1(X)$.
\end{proof}

By Kronheimer's work, the possibilities for $\pi^\infty_1(\tX)$ are exactly the finite subgroups of $SU(2)$. On the other hand, the fundamental group at infinity of $X$ could a priori be any 3-dimensional spherical space form group.

\subsection{Space form groups}

The task of classifying all 3-dimensional spherical space forms is equivalent to i) finding all finite groups which admit a faithful representation into $SO(4)$ which acts freely on the 3-sphere, and then ii) classifying all such representations up to equivalence and group automorphisms. A nice exposition in arbitrary dimension can be found in the book by Wolf \cite{Wolf}. In particular, the solution to problem i) in dimension 3 is the following:

\begin{prop} \label{P:SpaceForms}
Every subgroup of $SO(4)$ which acts freely on the 3-sphere is isomorphic to
\begin{enumerate}
\item{the trivial group $1$,}
\item{a binary dihedral group $D^*_{4p}$ for some $p > 1$,}
\item{the binary tetrahedral group $T^*$,}
\item{the binary octahedral group $O^*$,}
\item{the binary icosahedral group $I^*$,}
\item{one of the groups $T'_{8 \cdot 3^k}$ for $k > 1$,}
\item{one of the groups $D'_{2^{k + 2} p}$ for $k$ greater than $0$ and $p$ an odd integer larger than $1$,}
\end{enumerate}
or a product of one of these groups with a cyclic group of relatively prime order (so in particular, all cyclic groups are space form groups).
\end{prop}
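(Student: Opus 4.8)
The plan is to translate the geometric hypothesis ``acts freely on $S^3$'' into algebra via the double cover of $SO(4)$, and then to carry out a structured case analysis. The first step is to pass to $Spin(4) \cong SU(2) \times SU(2)$. Identifying $S^3$ with the group $Sp(1)$ of unit quaternions, every element of $SO(4)$ acts as $x \mapsto \ell x r^{-1}$ for a pair of unit quaternions $(\ell, r)$, well-defined up to the simultaneous sign change $(\ell, r) \mapsto (-\ell, -r)$. Thus a finite subgroup $G \leq SO(4)$ lifts to a subgroup $\tilde G \leq SU(2) \times SU(2)$ containing the central element $(-1,-1)$, and I would record its two coordinate projections $L = \pi_L(\tilde G)$ and $R = \pi_R(\tilde G)$. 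By the classification of finite subgroups of $SU(2)$ recalled above, each of $L$ and $R$ is cyclic, binary dihedral, or one of $T^*$, $O^*$, $I^*$.

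The second step is to make freeness explicit. A fixed point of $(\ell, r)$ is a unit quaternion $x$ with $\ell x = x r$, that is, with $\ell$ conjugate to $r$ in $Sp(1)$; since two unit quaternions are conjugate exactly when they have the same real part, the action is free precisely when every $(\ell, r) \in \tilde G$ representing a nontrivial element of $SO(4)$ satisfies $\mathrm{Re}(\ell) \neq \mathrm{Re}(r)$. Pure left and right translations (one component equal to $\pm 1$) are automatically fixed-point free, so the condition bites only on elements whose two components are both noncentral quaternions of equal rotation angle: these must be excluded from $\tilde G$.

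The third step is to organize the subgroups of $L \times R$ using Goursat's lemma, which presents $\tilde G$ by normal subgroups $L_0 \trianglelefteq L$ and $R_0 \trianglelefteq R$ together with an isomorphism $L/L_0 \cong R/R_0$. The freeness condition from the second step restricts the admissible Goursat data, and I would run through the finitely many possibilities for the unordered pair $(L, R)$, compute the resulting abstract group, and check freeness. The degenerate cases (one factor trivial, or the fiber product splitting as a direct product) yield the cyclic groups and their products with coprime cyclic groups; a dominant polyhedral factor yields the binary dihedral, $T^*$, $O^*$, and $I^*$ groups; and the genuinely mixed fiber products---those not embeddable in a single $SU(2)$---yield the two exceptional families $T'_{8 \cdot 3^k}$ and $D'_{2^{k+2} p}$.

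I expect this last case analysis to be the main obstacle: confirming that the mixed fiber products actually satisfy the real-part condition of the second step, and then correctly identifying their isomorphism types, especially the infinite families in (6) and (7), whose structure has to be read off by hand from the Goursat data since they are not themselves subgroups of $SU(2)$. As an independent check, one can first derive Milnor's necessary ``$pq$-conditions''---every subgroup of order $pq$ is cyclic, equivalently every Sylow subgroup is cyclic or generalized quaternion---and invoke the Suzuki--Zassenhaus classification of such groups; but since that classification governs free actions on spheres of arbitrary dimension, one would still have to single out the groups admitting a fixed-point-free four-dimensional real representation, so the representation-theoretic bookkeeping cannot be avoided either way.
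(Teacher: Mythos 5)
The paper itself does not prove this proposition; it is quoted from Wolf's book, where it is obtained from the general-dimensional theory of fixed-point-free orthogonal representations (Sylow conditions, Vincent's classification). Your setup takes the classical dimension-three route instead (Threlfall--Seifert, as in Scott's exposition of spherical $3$-manifold geometry): lift to $Spin(4) \cong SU(2) \times SU(2)$, express the action quaternionically as $x \mapsto \ell x r^{-1}$, translate freeness into the condition $\mathrm{Re}(\ell) \neq \mathrm{Re}(r)$ on noncentral pairs, and organize subgroups of products by Goursat's lemma. All three of these steps are stated correctly, and this is a legitimate and genuinely different route to the statement.

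However, what you have written is a plan, not a proof: the entire content of the classification is in the case analysis that you explicitly defer (``I would run through the finitely many possibilities,'' ``I expect this last case analysis to be the main obstacle''). Two concrete problems. First, the enumeration is not finite as you describe it: the pair $(L, R)$ of projections ranges over infinitely many isomorphism types (cyclic and binary dihedral groups of arbitrary order), so ``run through the finitely many possibilities'' cannot be executed as stated. What makes the analysis tractable is a structural lemma you never formulate --- for instance, that the kernels $L_0 = \tilde{G} \cap (SU(2) \times 1)$ and $R_0 = \tilde{G} \cap (1 \times SU(2))$ cannot both be noncyclic (every noncyclic finite subgroup of $SU(2)$ contains an element of order $4$, i.e.\ of real part $0$, and two such elements in $L_0$ and $R_0$ would multiply to a nontrivial element of $\tilde{G}$ with equal real parts), which then propagates through the Goursat data $L/L_0 \cong R/R_0$ to force one side to be essentially cyclic. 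Second, the two exceptional families (6) and (7), and the coprimality conditions on the cyclic factors, are exactly the output of that analysis; asserting that the ``genuinely mixed fiber products'' yield them is a statement of the expected answer, not a derivation, and it also requires the bookkeeping step (which you do not mention) of passing from $\tilde{G}$ back to $G = \tilde{G}/\{\pm(1,1)\}$ when identifying isomorphism types. As it stands, the proposal identifies the right machinery but leaves the theorem itself unproved.
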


Presentations for individual groups will be given when we need them.

\begin{remark} \label{R:Conventions}
To minimize ambiguity, subscripts in the names of groups will always denote the order of the group: e.g., the binary dihedral group of order 8 is $D_{8}^*$, not $D_{2}^*$. Notice also the convention on the indices in cases 2, 6, and 7: this is to eliminate incidental isomorphisms between groups on the list, and so that we can say things like ``no binary dihedral group is abelian'', even though $D_{4}^* \cong \Z_4$.
\end{remark}

\begin{remark} \label{R:SpaceForms}
It follows from this proposition that every space form group is either a cyclic group of odd order or it has a unique element of order 2, and that every Sylow 2-subgroup of a space form group is either cyclic or binary dihedral of the form $D_{4 \cdot 2^k}^*$.
\end{remark}

Using the same trick (and the same homomorphism) we used to identify the finite subgroups of $SU(2)$, it is easy to identify the normal subgroups of the binary polyhedral groups:
\begin{prop} \label{P:normal}
Every proper normal subgroup of $D^*_{4b}$ is either cyclic, or if $b$ is even, a binary dihedral group of index 2. The non-trivial proper normal subgroups of $T^*$ are isomorphic to $\Z_2$ or $D_8^*$; those of $O^*$ are isomorphic to $\Z_2$, $D_8^*$, or $T^*$; and the only non-trivial proper normal subgroup of $I^*$ is $\Z_2$.
\end{prop}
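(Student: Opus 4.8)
The plan is to reduce everything to the central extension $1 \to \{\pm\Id\} \to G^* \xrightarrow{\phi} G \to 1$, where $G^*$ is the binary group under consideration and $G = \phi(G^*)$ is the corresponding subgroup of $SO(3)$ (a dihedral group of order $2b$ in the first case, and $A_4$, $S_4$, or $A_5$ in the remaining three). The key observation is a clean dichotomy for any subgroup $N^* \le G^*$: either $-\Id \notin N^*$, in which case $N^*$ is a subgroup of $SU(2)$ avoiding the central involution and hence, by the classification recalled above, cyclic of odd order; or $-\Id \in N^*$, in which case $N^*$ contains the entire kernel of $\phi$ and so equals the full preimage $\phi^{-1}(\phi(N^*))$ of its image. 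Since $\phi$ is surjective, the image of a normal subgroup is normal, and the full preimage of a normal subgroup is normal; thus the normal subgroups containing $-\Id$ are in bijection with the normal subgroups of $G$ via $N^* \mapsto \phi(N^*)$. The whole problem therefore splits into (a) identifying the isomorphism type of $\phi^{-1}(N)$ for each normal $N \trianglelefteq G$, and (b) checking that the normal subgroups avoiding $-\Id$, which are automatically cyclic, are accounted for.

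To carry out (a) I would use three facts about $\phi$: the preimage of a cyclic rotation group is cyclic of twice the order (recalled above); the preimage of the order-four rotation group $V_4 \trianglelefteq A_4, S_4$ is the quaternion group, i.e.\ $D_8^*$; and $\phi^{-1}(A_4) = T^*$ by the very definition of the binary tetrahedral group. The normal-subgroup lattices of the polyhedral groups are then standard. As $A_5$ is simple, the only normal subgroups of $I^*$ containing $-\Id$ are $\phi^{-1}(1) = \Z_2$ and $\phi^{-1}(A_5) = I^*$; the normal subgroups of $A_4$ are $1, V_4, A_4$, with preimages $\Z_2, D_8^*, T^*$; and those of $S_4$ are $1, V_4, A_4, S_4$, with preimages $\Z_2, D_8^*, T^*, O^*$. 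For step (b), none of $A_4$, $S_4$, $A_5$ has a non-trivial normal subgroup of odd order, so a normal subgroup avoiding $-\Id$ must map to the identity and hence be trivial. This shows the lists for $I^*$, $T^*$, and $O^*$ are complete and exactly as stated.

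The dihedral case is where the bookkeeping demands the most care, and I expect it to be the main obstacle. Writing $G = \phi(D^*_{4b})$ as the dihedral group of order $2b$ with rotation subgroup $C_b$, I would invoke the standard description of its normal subgroups: the cyclic rotation subgroups $C_d$ for $d \mid b$, the whole group, and, precisely when $b$ is even, exactly two further normal subgroups of index two, each itself dihedral. The preimage of each $C_d$ is cyclic (as are, by the dichotomy, all normal subgroups avoiding $-\Id$), the preimage of $G$ itself is $D^*_{4b}$ and so not proper, and the preimage of each index-two dihedral subgroup is a binary dihedral subgroup of index two in $D^*_{4b}$, arising exactly when $b$ is even. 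The care required is entirely in the dihedral normal-subgroup count: one must confirm that for $b$ odd no proper normal subgroup contains a reflection, so that every proper normal subgroup is cyclic, while for $b$ even exactly two do, with preimages that are binary dihedral of index two (for very small $b$ these may happen to be cyclic, e.g.\ $D_4^* \cong \Z_4$, which is still consistent with the stated dichotomy). Granting this, every proper normal subgroup of $D^*_{4b}$ is cyclic or, when $b$ is even, binary dihedral of index two, which is the assertion.
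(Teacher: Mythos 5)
Your proof is correct and is exactly the argument the paper has in mind: the paper states this proposition without a written proof, remarking only that it follows by ``the same trick (and the same homomorphism)'' used to identify the finite subgroups of $SU(2)$, which is precisely the spin double cover $\phi\colon SU(2) \to SO(3)$ and the $-\Id$ dichotomy you set up, with normal subgroups containing $-\Id$ corresponding bijectively to normal subgroups of the image in $SO(3)$. Your detailed bookkeeping --- the normal-subgroup lattices of $D_{2b}$, $A_4$, $S_4$, $A_5$, the identification of the preimages, the fact that normal subgroups avoiding $-\Id$ are cyclic of odd order, and the small-$b$ edge cases --- is sound.
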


We will also be interested in certain supergroups of the binary polyhedral groups:
\begin{prop} \label{P:split}
If a space form group of the form $\Z_m \times G$ contains a non-cyclic binary polyhedral group $B$ (i.e.,  $D_{4b}^*$, $T^*$, $O^*$, or $I^*$) (resp. as a normal subgroup), then $G$ also contains $B$ (resp. as a normal subgroup).
\end{prop}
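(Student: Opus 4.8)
The plan is to exploit the product structure directly, via the projection homomorphism $\pi : \Z_m \times G \to G$ onto the second factor, and to study its restriction to the binary polyhedral subgroup $B$. Writing $H = \Z_m \times G$ for the ambient space form group, I would first identify the kernel $\ker(\pi|_B) = B \cap (\Z_m \times 1)$. Since the factor $\Z_m \times 1$ is central in $H$, this intersection is a central subgroup of $B$; and for every non-cyclic binary polyhedral group the center is exactly $\Z_2$, generated by the unique element of order $2$ (namely $-\Id$). Hence $\ker(\pi|_B)$ is either trivial or all of $Z(B)$, and the whole question reduces to deciding which.

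The crux is to rule out the second possibility, and I expect this to be the heart of the argument. If $\ker(\pi|_B) = Z(B)$, then $\pi(B) \cong B/Z(B)$ is one of the ordinary polyhedral groups $D_{2b}$, $A_4$, $S_4$, or $A_5$, each of which contains more than one element of order $2$. But $\pi(B)$ is a subgroup of $G$, and $G \cong 1 \times G$ sits inside $H$, so these involutions are distinct elements of order $2$ in $H$ itself. This contradicts Remark~\ref{R:SpaceForms}, which forces a space form group to be either cyclic of odd order (hence with no involutions at all) or to possess a \emph{unique} element of order $2$. The contradiction shows $\ker(\pi|_B)$ must be trivial, so $\pi|_B$ is injective and $\pi(B) \cong B$ is a genuine copy of $B$ sitting inside $G$. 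The leverage provided by the unique-involution property of space form groups is what makes this step work, so I would set it up to apply cleanly to all four types of $B$ at once.

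Finally, for the parenthetical normality statement, no extra work is needed beyond the injectivity just established: since $\pi$ is surjective, the image of a normal subgroup is normal, so if $B \trianglelefteq H$ then $\pi(B) \trianglelefteq \pi(H) = G$. Combined with $\pi(B) \cong B$, this exhibits $B$ inside $G$ as a normal subgroup whenever $B$ is normal in $H$. The one routine check I would still want to record is the determination $Z(B) \cong \Z_2$ for each of $D_{4b}^*$ (with $b \ge 2$), $T^*$, $O^*$, and $I^*$, which underlies the trichotomy in the first paragraph; this is a short calculation from the standard presentations and the identification of the center with $\{\pm\Id\}$.
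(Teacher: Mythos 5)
Your proof is correct and follows essentially the same route as the paper's: project onto the $G$ factor, observe that the kernel of the restriction to $B$ is central (hence trivial or the full center $\{\pm\Id\}\cong\Z_2$), rule out the nontrivial kernel because the resulting polyhedral quotient $D_{2b}$, $A_4$, $S_4$, or $A_5$ would plant multiple involutions inside a space form group, and finish normality via surjectivity of the projection. The only cosmetic difference is that you derive the contradiction from the unique-involution property of the ambient group $\Z_m \times G$ while the paper applies it to $G$ itself; these are interchangeable since the involutions in question lie in both.
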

\begin{proof}
Let $\phi: B \hookrightarrow \Z_m \times G$ be an injective homomorphism, and denote by $\phi_1$ and $\phi_2$ the component homomorphisms. If $a \in B$ is in the kernel of $\phi_2$, then $\phi(a)$ lies in the subgroup $\Z_m \times \{e\}$, which is in the center of $\Z_m \times G$. Since $\phi$ is injective, this implies that $a$ must be in the center of $B$, which is exactly the center $\{\pm \Id\}$ of $SU(2)$. This is easy to see if $B = T^*$, $O^*$, or $I^*$, since the quotients $A_4$, $S_4$, and $A_5$ by the center of $SU(2)$ are centerless. For $D_{4b}^*$, the presentation $D_{4b}^* = \langle \, a, x \mid a^{2b} = e, x^2 = a^b, x^{-1} a x = a^{-1} \,\rangle$ shows that the center has 2 elements, and $\{\pm \Id\}$ is the unique 2-element subgroup of $SU(2)$.

Thus, if $\phi_2$ is not an injection, then its kernel is exactly the center of $B$. But then $\phi_2$ descends to an injection of the polyhedral group $B / \{\pm 1\}$ into the space form group $G$. This is impossible, because none of the polyhedral groups are 3-dimensional space form groups. In particular, space form groups contain at most one element of order 2, but $D_{2b}$ contains $b$ reflection, $T \cong A_4$ contains both $(1 2)(3 4)$ and $(1 3)(2 4)$, and $O$ and $I$ both contain $T$. Thus, we have a contradiction and so $\phi_2$ gives an injection of $B$ into $G$.

Finally, notice that projection onto a direct product factor takes normal subgroups to normal subgroups.
\end{proof}

We are now prepared to look at each possibility for $\pi_1^\infty(\tX)$ to see which ones can actually occur.

\section{Exceptional cases}

\subsection{Binary icosahedral}

Suppose that the fundamental group at infinity of $\tX$ is the binary icosahedral group $I^*$. From Table~\ref{Table:Kronheimer}, we know that $\chi(\tX) = 9$, so the degree of the covering map $p: \tX \to X$ is either 3 or 9. In the latter case, since every group with 9 elements has a normal subgroup with 3 elements, there is a covering space of $X$ with fundamental group of order 3, and so in order to rule out this case, it suffices to assume that $\pi_1(X) = \Z_3$. This means that the exact sequence (\ref{E:ExactSequence}) takes the form
\[
1 \to I^* \to G \to \Z_3 \to 1
\]
for some spherical space form group $G$.

To determine all such extensions $G$, we need to know the center of $I^*$ (isomorphic to $\Z_2$, as we saw in the proof of Proposition~\ref{P:split}), and also the outer automorphism group of $I^*$ (also isomorphic to $\Z_2$ \cite[p.195]{Wolf}). Since $|\Z_3| = 3$ is relatively prime to $|Z(I^*)| = 2$, every such extension splits \cite[11.3.8 \& 11.4.10]{Robinson}, and since $|\Z_3|$ is relatively prime to $|\text{Out}(I^*)|$, the only extension is the \emph{direct} product $I^* \times \Z_3$. But this is not a space form group, because $3$ is not relatively prime to the order of $I^*$. Therefore, this case is impossible.

\subsection{Binary octahedral}

Now suppose that $\pi_1^\infty(\tX)$ is binary octahedral. In this case, $\chi(\tX) = 8$, so the degree of the covering must divide 8. However, there are only a few groups with order dividing 8, and one can check that they all have normal subgroups of order 2. Thus, just as above, we may assume that the degree of the covering map is 2. Then the exact sequence (\ref{E:ExactSequence}) takes the form

\[
1 \to O^* \to G \to \Z_2 \to 1.
\]

Now, $G$ must be a space form group of order 96, so by examining the list in Proposition~\ref{P:SpaceForms} we have the following possibilities: $\Z_{96}$, $D_{96}^*$, $\Z_3 \times D_{32}^*$, or $D'_{2^{5} \cdot 3}$. The first one can be ruled out because $O^*$ is not abelian, so it cannot be contained in $\Z_{96}$. We know from Proposition \ref{P:normal} that the subgroups of index 2 in $D_{96}^*$ are isomorphic to $D_{48}^*$, which is not isomorphic to $O^*$, so this case is also impossible. If $G$ were isomorphic to $\Z_3 \times D_{32}^*$, then Proposition~\ref{P:split} would imply that $O^*$ was contained in $D_{32}^*$, which is impossible because $|O^*| > |D_{32}^*|$. Finally, the Sylow 2-subgroups of $O^*$ are isomorphic to $D_{32}^*$ (by Remark~\ref{R:SpaceForms} one needs only observe that they are not cyclic), so if $O^*$ were contained in $D'_{2^5 \cdot 3}$, then $D_{32}^*$ would be also. When we look at the binary dihedral case later, we will see that this is not so. Thus, this case is also impossible.

\section{Binary tetrahedral}

We now suppose that $\pi_1^\infty(\tX)$ is the binary tetrahedral group $T^*$. Since $\chi(\tX) = 7$ (Table~\ref{Table:Kronheimer}), the only possibility for $\pi_1(X)$ is $\Z_7$, and the exact sequence becomes
\[
1 \to T^* \to G \to \Z_7 \to 1.
\]

Since the center and outer automorphism groups of $T^*$ each contain two elements, for the same reason as in the case of $I^*$, any extension of $T^*$ by a group of odd order splits as a direct product, so $G = \Z_7 \times T^*$. This actually \emph{is} a space form group, so we must rule out this case in another way.

It is clear that up to diffeomorphism we can represent $X$ as the interior of a compact 4-manifold with boundary $Y = S^3 / G$. Now, this $Y$ is a Seifert-fibered 3-manifold with Seifert invariants
\[
(b; (a_1, b_1), (a_2, b_2), (a_3, b_3)) =(0; (3, 4), (3, 4), (-2, 3)).
\]
(These invariants are well-defined only up to an equivalence relation, and we've made a choice that is suited to our purpose). Since
\[
   a_1 a_2 a_3 \left(\frac{b_1}{a_1} + \frac{b_2}{a_2} + \frac{b_3}{a_3} - b \right) = -21
\]
is odd, the space $Y$ is a $\Z_2$-homology sphere \cite{Saveliev}. Thus, $Y$ has a unique spin structure, and Rokhlin's theorem \cite{Rokhlin} implies that the quantity $\mu = \frac{\tau(M)}{8} \pmod{2}$, called the Rohklin invariant of $Y$,  is the same for any choice of spin 4-manifold $M$ with $\partial M = Y$. We will compute $\mu(Y)$ and show that $Y$ cannot possibly bound $X$.

We use the following formula \cite{Neumann}:

\begin{theorem}
Given a Seifert-fibered $\Z_2$-homology 3-sphere $N$ with Seifert invariants $(0; (a_1, b_1), (a_2, b_2), (a_3, b_3))$ such that exactly one $a_i$ is even and each $a_i - b_i$ is odd, the Rokhlin invariant of $N$ is given by
\[
   \mu(N) \equiv \left(\sum c(a_i - b_i, a_i) + \sign e(N)\right) / 8 \pmod{2},
\]
where:
\begin{itemize}
\item{$e(N) = \sum (b_i / a_i)$; and}
\item{$c$ is defined recursively by $c(a, \pm 1) = 0$ for odd $a$, $c(a \pm 2b, b) = c(a, b)$, $c(a, b + a) = c(a, b) + \sign b(b + a)$, and $c(a, b) = -c(-a, b) = -c(a, -b)$.}
\end{itemize}
\end{theorem}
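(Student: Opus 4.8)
The plan is to exhibit $N$ as the boundary of a plumbed $4$-manifold and to extract the Rokhlin invariant from the signature defect of that filling. A Seifert fibration over $S^2$ with three exceptional fibers bounds the plumbing $4$-manifold $W$ whose weighted graph is star-shaped: a central vertex carrying the Euler-number datum, together with three linear arms, the $i$th arm being the chain of $2$-spheres whose integer framings $-c_{i1}, \dots, -c_{i\ell_i}$ are read off from the continued-fraction expansion of $a_i/b_i$. Then $W$ is simply connected, $H_2(W;\Z)$ is free on the vertices, the intersection form is the plumbing matrix $Q$, and $\partial W = N$. Because $N$ is a $\Z_2$-homology sphere, $H_1(N;\Z)$ is finite of odd order; in particular $N$ is a rational homology sphere, so $Q$ is nondegenerate over $\mathbb{Q}$ and the only boundary data to track is the unique spin structure of $N$.

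Since $W$ itself is generally not spin, I would invoke the general signature-defect formula
\[
   \mu(N) \equiv \frac{\tau(W) - c \cdot c}{8} \pmod 2,
\]
where $\tau(W)$ is the signature, $c$ is any characteristic covector of $Q$ (i.e.\ $c \equiv \diag(Q) \pmod 2$), $c \cdot c$ is its self-pairing under the rational intersection form, and $c$ is chosen to induce the spin structure of $N$ on the boundary. The first real task is to pin down this $c$. Here the hypotheses that exactly one $a_i$ is even and that each $a_i - b_i$ is odd do the work: they fix the parities of the framings along the arms and thereby single out a canonical characteristic covector that restricts to the unique spin structure of $N$.

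With $W$ and $c$ fixed, I would compute $\tau(W) - c \cdot c \pmod{16}$ by exploiting the block structure of the star-shaped graph. Signature additivity under plumbing separates a central contribution from three arm contributions. The central piece produces the term $\sign e(N)$, the sign of the Euler number (the self-intersection sign of the central sphere after the arms are blown down), while each arm contributes a number depending only on $(a_i,b_i)$. The heart of the matter is an induction along the Euclidean/continued-fraction algorithm showing that the $i$th arm contributes exactly $c(a_i - b_i, a_i)$: the four relations defining $c$ are precisely the elementary reduction moves, with $c(a,\pm 1)=0$ as the terminal step, $c(a\pm 2b,b)=c(a,b)$ and $c(a,b+a)=c(a,b)+\sign b(b+a)$ as the reductions of the two arguments, and the symmetries $c(a,b)=-c(-a,b)=-c(a,-b)$ recording orientation reversals.

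I expect the main obstacle to be the two parity computations in the middle. First, one must verify that the chosen $c$ genuinely restricts to the unique spin structure of $N$, since a wrong choice would shift the answer by a multiple of $8$ and corrupt the mod-$2$ value of $\mu$. Second, one must check that the mod-$16$ reduction of each arm contribution obeys \emph{exactly} the signed recursion for $c$, sign terms included. Both are delicate, and it is precisely at these two points that the parity hypotheses on the $a_i$ and on $a_i - b_i$ are indispensable; once they are in place, signature additivity and the continued-fraction induction complete the proof.
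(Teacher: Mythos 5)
First, a point of comparison that matters for this review: the paper does not prove this theorem at all. It is quoted as a known formula (``We use the following formula \cite{Neumann}'') and then applied as a black box to compute $\mu(Y)$ for the space form $Y$ with $\pi_1(Y) = \Z_7 \times T^*$. So there is no in-paper argument to measure your proposal against; the relevant comparison is with the cited literature, whose proof architecture --- a star-shaped plumbing $W$ bounded by $N$, a signature-defect formula for a characteristic class, and a continued-fraction induction along the arms --- your outline does reproduce.

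Judged as a proof, however, your proposal has a genuine gap at its foundation. The ``general signature-defect formula'' $\mu(N) \equiv \left(\tau(W) - c \cdot c\right)/8 \pmod 2$ is \emph{false} for an arbitrary characteristic covector $c$: replacing $c$ by $c + 2x$ changes $c \cdot c$ by $4(c \cdot x + x \cdot x)$, which is $\equiv 0 \pmod 8$ but not in general $\pmod{16}$ (already in the lattice $\langle 1 \rangle$, the characteristic vectors $1$ and $3$ have squares $1$ and $9$), so the right-hand side depends on the choice of $c$. The correct general statement (Freedman--Kirby, Guillou--Marin) is $\tau(W) - F \cdot F \equiv 8\mu(N) + 2\beta(F) \pmod{16}$, where $F$ is an embedded \emph{surface} representing the characteristic class and $\beta(F) \in \Z_8$ is the Brown invariant of the induced quadratic form on $H_1(F;\Z_2)$. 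To obtain the clean formula you must produce a representative with vanishing Brown invariant; for plumbed 4-manifolds this is precisely the theorem (proved by plumbing calculus, not citable as a generality) that the Wu class supported on a $0/1$-combination of the plumbing spheres computes $\mu$. Note also that the difficulty you do flag --- whether $c$ induces the correct spin structure on $N$ --- is vacuous, since a $\Z_2$-homology sphere has a unique spin structure; the real issue hiding at that step is the choice-dependence just described, which your proposal never addresses. Finally, the combinatorial heart of the statement --- that the arm contributions to $\tau(W) - w \cdot w$ satisfy exactly the recursion defining $c(a_i - b_i, a_i)$, that the parity hypotheses on $a_i$ and $a_i - b_i$ control the support of the Wu class, and that the central vertex contributes $\sign e(N)$ --- is asserted as ``an induction'' rather than carried out. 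Since the theorem's entire content is this identity, what you have is a correct plan (essentially the plan of the cited proof) rather than a proof.
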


We then compute:
\begin{align*}
\mu(Y) &\equiv \left[ c(-1, 3) + c(-1, 3) + c(-5, -2) + \sign\left(\frac{4}{3} + \frac{4}{3} - \frac{3}{2}\right) \right] / 8 \pmod{2}\\
       &\equiv -\frac{1}{4} \pmod{2}.
\end{align*}

Now, $X$ is spin, because $p^* w_2(X) = w_2(\tX) = 0$,  and $p^*$ is injective on $H^2(X, \Z_2)$ because the degree of the covering is odd. Thus, we must have
\begin{equation*}
\mu(Y) \equiv \frac{\tau(X)}{8} \pmod{2}.
\end{equation*}

But since $\chi(\tX) = 7$ and the degree of the covering map is also $7$, we have $\chi(X) = 1$ and so by Remark~\ref{R:Cohomology}, the signature of $X$ must be 0, which is a contradiction. Thus, this case is ruled out.

\section{Binary dihedral case}

Suppose now that $\pi_1^\infty(\tX)$ is the binary dihedral group $D_{4b}^*$. Just as before, we have an exact sequence
\[
1 \to D_{4b}^* \to G \to H \to 1,
\]
where $G$ is a space form group.

\subsection{Possibilities for $G$}

We first determine which space form groups $G$ contain $D_{4b}^*$ as a normal subgroup.  By Proposition~\ref{P:split}, we consider first only those $G$ which do not split off a cyclic factor. We saw in Proposition \ref{P:normal} that the groups $T^*$, $O^*$, and $I^*$ do not contain normal subgroups of the form $D_{4b}^*$ for $b > 2$, but they do contain subgroups isomorphic to $D_8^*$. The same proposition tells us that the only possibilities with $G$ itself a binary dihedral group are $D_{4b}^*$ or $D_{4\cdot2b}^*$. We thus only have two cases left to consider: $D'_{2^{k + 2} p}$ and $T^*_{8 \cdot 3^v}$. We use the presentation
\begin{equation} \label{E:BinaryDihedral}
D_{4b}^* = \langle\, z, w \mid z^{2b} = 1, z^b = w^2, wzw^{-1} = z^{-1} \,\rangle.
\end{equation}

\begin{prop}
The groups $D'_{2^{k + 2} p}$ contain no binary dihedral subgroups (see Remark~\ref{R:Conventions}).
\end{prop}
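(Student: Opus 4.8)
The plan is to pin down the structure of $D'_{2^{k+2}p}$ explicitly and then compare its elements of order $4$ with those of a binary dihedral group. First I would record the presentation
\[
D'_{2^{k+2}p} = \langle\, x, y \mid x^{2^{k+2}} = 1,\ y^{p} = 1,\ xyx^{-1} = y^{-1} \,\rangle,
\]
so that $G := D'_{2^{k+2}p}$ is the semidirect product $\Z_p \rtimes \Z_{2^{k+2}}$ in which $x$ acts on $\Z_p = \langle y\rangle$ by inversion. In particular $|G| = 2^{k+2}p$, and $\langle x\rangle \cong \Z_{2^{k+2}}$ is a \emph{cyclic} Sylow $2$-subgroup, consistent with Remark~\ref{R:SpaceForms}.

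The key observation is that every element of $G$ of order $4$ is central. Indeed, the order-$4$ subgroup of the cyclic group $\langle x \rangle$ is $\langle x^{2^{k}}\rangle$, and since $k > 0$ the exponent $2^{k}$ is even, so $x^{2^{k}}$ commutes with $y$ (because $x^{2^k} y x^{-2^k} = y^{(-1)^{2^k}} = y$) and of course with $x$; thus $\langle x^{2^{k}}\rangle \le Z(G)$. Now any element of order $4$ generates a copy of $\Z_4$ lying in some Sylow $2$-subgroup; all such subgroups are conjugate and cyclic, each containing a unique subgroup of order $4$, and since the order-$4$ subgroup $\langle x^{2^k}\rangle$ of $\langle x\rangle$ is central it is carried to itself under conjugation. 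Hence every Sylow $2$-subgroup has the \emph{same} order-$4$ subgroup $\langle x^{2^k}\rangle$, and every order-$4$ element of $G$ lies in $Z(G)$.

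Finally I would contradict the existence of a binary dihedral subgroup $D^*_{4b}$ (with $b > 1$) using the presentation (\ref{E:BinaryDihedral}): there $w^2 = z^b$ has order $2$, so $w$ has order $4$, while $wzw^{-1} = z^{-1} \ne z$ (as $z$ has order $2b > 2$) shows $w$ is \emph{not} central. Thus $D^*_{4b}$ always contains a non-central element of order $4$, which cannot be accommodated inside $G$, completing the argument.

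The step I expect to be the main obstacle is the case in which $b$ is odd. When $b$ is even, the Sylow $2$-subgroup of $D^*_{4b}$ is binary dihedral (generalized quaternion) and hence non-cyclic, so it cannot embed in the cyclic Sylow $2$-subgroup of $G$ and one is done immediately; but when $b$ is odd the Sylow $2$-subgroup of $D^*_{4b}$ is merely $\Z_4$, which \emph{does} embed, so a crude comparison of Sylow $2$-subgroups fails. This is exactly why the sharper centrality statement above is needed, and why the hypothesis $k > 0$ is essential: for $k = 0$ the element $x = x^{2^{0}}$ itself has order $4$ and inverts $y$, and indeed $D'_{4p}$ would literally be the binary dihedral group $D^*_{4p}$.
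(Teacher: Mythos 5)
Your proof is correct, and its skeleton matches the paper's: both arguments rest on the key fact that every element of order $4$ in $D'_{2^{k+2}p}$ is central, and then derive a contradiction from the fact that a binary dihedral group $D^*_{4b}$ (with $b \geq 2$, per Remark~\ref{R:Conventions}) contains the order-$4$ element $w$, which fails to be central since $wzw^{-1} = z^{-1} \neq z$. The difference is in how the central-order-$4$ fact is established: the paper writes a general element as $x^c y^d$ and computes directly that $(x^c y^d)^2 = x^{2^{k+1}}$ forces $c$ even and $d = 0$, so that order-$4$ elements are even powers of $x$ and hence central; you instead invoke Sylow theory, noting that the Sylow $2$-subgroups are cyclic and conjugate, so their unique order-$4$ subgroups all coincide with the central subgroup $\langle x^{2^k}\rangle$. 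Your route avoids the normal-form computation, and it has the added virtue of making explicit both why the hypothesis $k > 0$ is indispensable (for $k = 0$ one gets $D'_{4p} \cong D^*_{4p}$, so the proposition would be false) and why a crude comparison of Sylow $2$-subgroups cannot suffice when $b$ is odd --- a point the paper's proof handles implicitly but never isolates.
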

\begin{proof}
We use the presentation
\begin{equation}\label{E:D'}
D'_{2^{k + 2} p} = \langle\, x, y \mid x^{2^{k + 2}} = y^p = 1, yx = xy^{-1} \,\rangle.
\end{equation}

Notice that every element of this group can be written uniquely as $x^c y^d$ for $c \in \Z_{2^{k + 2}}$ and $d \in \Z_p$, and that the unique element of order 2 in this group is $x^{2^{k + 1}}$. Thus, the elements of order 4 in this group are exactly those which square to $x^{2^{k + 1}}$. Suppose that $x^c y^d$ is such an element. Then
\[
(x^c y^d)^2 = x^{2c} y^{d \cdot (-1)^c + d} = x^{2^{k + 1}}
\]
implies that $2c \equiv 2^{k + 1} \pmod{2^{k + 2}}$, so in particular $c$ is even (since by convention $k > 0$). Thus $2d \equiv 0 \pmod{p}$, and since $p$ is odd, this means that $d = 0$. Thus, every element of order 4 is an even power of $x$. But the relations in (\ref{E:D'}) imply that all even powers of $x$ are in the center of the group.

On the other hand, every binary dihedral group has at least one element of order 4, given by $w$ in the presentation (\ref{E:BinaryDihedral}), but the center of the group contains only one non-identity element, which has order 2. The result follows.
\end{proof}

\begin{prop}
The groups $T'_{8 \cdot 3^v}$ contains no binary dihedral subgroups $D_{4b}^*$ for $b > 2$.
\end{prop}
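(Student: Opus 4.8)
The plan is to exploit the fact that $T'_{8\cdot 3^v}$ has a normal Sylow $2$-subgroup with abelian quotient of $3$-power order, which is incompatible with containing a large binary dihedral group. From Wolf's description \cite{Wolf} these groups are semidirect products $T'_{8\cdot 3^v} = Q_8 \rtimes \Z_{3^v}$, in which the quaternion group $Q_8$ is normal (being the unique Sylow $2$-subgroup; note $Q_8 = D_8^*$ accords with Remark~\ref{R:SpaceForms}) and the cyclic factor acts through its quotient $\Z_3$ by cyclically permuting $i, j, k$. The object that drives everything is therefore the quotient homomorphism $\rho\colon T'_{8\cdot 3^v} \to \Z_{3^v}$ with kernel $Q_8$, together with the elementary fact that every element of $Q_8$ has order dividing $4$, so the largest cyclic subgroup of $Q_8$ is $\Z_4$.

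With these two facts in hand, I would argue by contradiction. Suppose $D^*_{4b} \leq T'_{8\cdot 3^v}$ for some $b > 2$, and take the generators $z, w$ of the presentation~(\ref{E:BinaryDihedral}), so that $z$ has order $2b \geq 6$ and $w z w^{-1} = z^{-1}$. Applying $\rho$ and using that $\Z_{3^v}$ is abelian gives $\rho(z) = \rho(w)\rho(z)\rho(w)^{-1} = \rho(z)^{-1}$, so that $\rho(z)^2 = 1$. Since a cyclic $3$-group has no element of order $2$, this forces $\rho(z) = 1$, i.e. $z \in \ker\rho = Q_8$.

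But then $\langle z \rangle \cong \Z_{2b}$ is a cyclic subgroup of $Q_8$, whose largest cyclic subgroup is $\Z_4$; hence $2b \leq 4$ and $b \leq 2$, contradicting $b > 2$. This also shows the threshold is sharp: $Q_8 = D_8^*$ itself sits inside $T'_{8\cdot 3^v}$, which is exactly why the statement excludes only $b > 2$ rather than all binary dihedral subgroups.

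I expect the only real obstacle to be the first step: pinning down the structure of $T'_{8\cdot 3^v}$ as $Q_8 \rtimes \Z_{3^v}$, or equivalently producing a surjection onto a nontrivial cyclic group of odd order whose kernel is a $2$-group. Once that quotient is available the contradiction is immediate, so the entire content of the proof is the observation that an element inverted by a conjugation cannot survive in an abelian quotient. If one prefers to avoid invoking Wolf's structure theory wholesale, the two facts actually used (normality of the order-$8$ Sylow subgroup and its cyclic $3$-power quotient, together with $Q_8$ having exponent $4$) can instead be read off from a presentation of $T'_{8\cdot 3^v}$, but this only relocates the same short computation.
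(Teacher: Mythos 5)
Your proof is correct, and it takes a genuinely different route from the paper's. The paper works with the 3-part of $D_{4b}^*$: it first notes $4b \mid 8\cdot 3^v$, then shows the odd-order cyclic subgroup $\langle z^p\rangle$ (with $p = 2$ or $4$) meets the center of $D_{4b}^*$ trivially, uses Sylow conjugacy to move its image into the Sylow 3-subgroup $\langle x \rangle$ of $T'_{8\cdot 3^v}$, and derives a contradiction from the fact that $\langle x^3\rangle$ is both central and the unique maximal subgroup of $\langle x\rangle$. You instead work with the 2-part: the inversion relation $wzw^{-1} = z^{-1}$ collapses in the abelian quotient $T'_{8\cdot 3^v}/Q_8 \cong \Z_{3^v}$, and since that quotient has odd order, $z$ is forced into the kernel $Q_8$, whose exponent is $4$, contradicting $|z| = 2b \geq 6$. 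Your argument is shorter and avoids both the Sylow theorems and the adjustment by inner automorphisms, and it cleanly explains why the bound $b > 2$ is sharp ($D_8^* = Q_8$ does embed). The one ingredient you defer to Wolf --- the decomposition $T'_{8\cdot 3^v} = Q_8 \rtimes \Z_{3^v}$ --- is in fact immediate from the paper's own presentation~(\ref{E:Tv}): the relations $p^4 = 1$, $p^2 = q^2$, $pqp^{-1} = q^{-1}$ exhibit $\langle p, q\rangle$ as a quotient of $Q_8$, the relations $xpx^{-1} = q$, $xqx^{-1} = pq$ show this subgroup is normal with cyclic quotient generated by the image of $x$, and the order count $|T'_{8\cdot 3^v}| = 8 \cdot 3^v$ forces $\langle p,q\rangle \cong Q_8$ and quotient $\cong \Z_{3^v}$; so your proof can be made entirely self-contained within the paper's framework.
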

\begin{proof}
We use the presentation
\begin{align} \label{E:Tv}
T'_{8 \cdot 3^v} = \langle\, x, p, q \mid & x^{3^v} = p^4 = 1, p^2 = q^2, xqx^{-1} = pq, \nonumber \\
~ & xpx^{-1} = q, pqp^{-1} = q^{-1} \,\rangle.
\end{align}

Suppose that we had an injective homomorphism $\phi: D_{4b}^* \to T'_{8 \cdot 3^v}$. Then we must have $4b \mid 8 \cdot 3^v$, so $b = 3^k$ or $b = 2 \cdot 3^k$ for some $k > 0$. From the presentation (\ref{E:BinaryDihedral}) we see that $z^p$ has order $3^k$, where $p = 2$ if $b$ is odd and $p = 4$ if $b$ is even. In either case, since $p$ divides $2b$ but not $b$, the subgroup $\langle z^p \rangle$ does not contain $z^b$, and so $\langle z^p \rangle$ intersects the center of the group only at the identity.

Returning to the presentation (\ref{E:Tv}), we see that $\langle x \rangle$ is a Sylow 3-subgroup, and a quick calculation shows that $x^3$ commutes with $p$ and $q$, so $\langle x^3 \rangle$ is contained in the center of the group. Thus, by modifying $\phi$ by an inner automorphism of $T'_{8 \cdot 3^v}$, we may assume that the image of $\langle z^p \rangle$ lies in $\langle x \rangle$. However, $\langle x^3 \rangle$ is the unique maximal subgroup of $\langle x \rangle$, so the image of $\langle z^p \rangle$ must intersect it non-trivially, which gives a contradiction.
\end{proof}

Thus, for $b > 2$, the only possibilities for $G$ which do not split off a cyclic factor are $D_{4b}^*$ and $D_{4\cdot 2b}^*$. For $b = 2$, we had the additional possibilities $T^*$, $O^*$, $I^*$, and $T'_{8 \cdot 3^v}$. However, in this case we see from Table~\ref{Table:Kronheimer} that $\chi(\tX) = 5$, so we can restrict our attention to those space form groups which contain $D_8^*$ as a normal subgroup of index 5, which rules out all of these extra possibilities. Therefore, for any $b$, either $G = \Z_m \times D_{4b}^*$ or $G = \Z_m \times D_{4 \cdot 2b}^*$. In the latter case, since $m$ is required to be odd and the abelianization of $D_{4b}^*$ has even order (either $\Z_2 \times \Z_2$ or $\Z_4$, depending on the parity of $b$), the composition of the injection $D_{4b}^* \hookrightarrow \Z_m \times D_{4\cdot 2b}^*$ with the projection onto the first factor is the zero map. Thus in this case, $\pi_1(X) = H = \Z_m \times \Z_2$, so by replacing $X$ with its covering space determined by the subgroup $\Z_m$, we may assume that $G = D_{4\cdot2b}^*$ and $H = \Z_2$.

\subsection{Eta invariants}

Since we were unable to eliminate every case when $\pi_1^\infty(\tX)$ is binary dihedral by looking at the possible space form groups, we must rule out the remaining cases by another method.

Recall the signature formula for Riemannian manifolds with boundary (see \cite{Atiyah}):
\begin{equation}\label{E:Signature}
\tau(M) = \frac{1}{12\pi^2} \int_X |W_+|^2 - |W_-|^2 d\mu - \eta(\partial M).
\end{equation}
This applies to our situation because this formula is conformally invariant, and our ALE manifolds can be made into compact manifolds with boundary by a conformal rescaling. The quantity $\eta(\partial M)$ is an invariant of $\partial M$ (for appropriate boundary conditions on the metric of $M$), built up from the spectrum of a certain differential operator. We will compute it by a different method.

Namely, given a spherical space form $S^3 / G$ for some finite subgroup $G < SO(4)$,  we have \cite[I.6]{Donnelly}
\begin{equation*}
\eta(S^3) - |G| \eta(S^3 / G) = - \sum_{g \neq 1} \eta_g(S^3),
\end{equation*}
where the sum is taken over the non-identity elements in $G$. If we think of $S^3$ as the boundary of the unit ball in $\R^4$ and consider the action of $G$ on the ball, then since the action has exactly one fixed point at the origin and the ball has no second cohomology, the quantity $\eta_g(S^3)$ is given by \cite[I.4]{Donnelly}
\begin{equation*}
\eta_g(S^3) = -\cot(\theta_1 / 2) \cot(\theta_2 / 2),
\end{equation*}
where for each $g$ we split the tangent space at the origin into two 2-planes, on which $dg$ acts by rotation through an angle $\theta_1$ and $\theta_2$, respectively. If $dg$ is actually complex-linear, then this just means that the complex eigenvalues are $\exp(\theta_1)$ and $\exp(\theta_2)$, where $\exp(x) := e^{2 \pi i x}$.

Since $S^3$ admits an orientation-reversing isometry, $\eta(S^3) = 0$, and we get the formula
\begin{equation*}
\eta(S^3 / G) = -\frac{1}{|G|} \sum_j \cot(\theta_{1j} / 2) \cot(\theta_{2j} / 2).
\end{equation*}

For an element $A \in U(2)$ (thought of as a matrix), we will denote by $\eta(A)$ the quantity $-\cot(\theta_1 / 2) \cot(\theta_2 / 2)$ described above. We will work only with matrices that are either diagonal or anti-diagonal. The second case turns out to be trivial:

\begin{claim} \label{C:anti-diagonal}
For any $A \in U(2)$ of the form
\[
\begin{bmatrix}
0 & \exp(a)\\
\exp(b) & 0\\
\end{bmatrix},
\]
we have $\eta(A) = 1$.
\end{claim}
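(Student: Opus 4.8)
The claim asserts that for any matrix $A \in U(2)$ of anti-diagonal form
\[
A = \begin{bmatrix} 0 & \exp(a) \\ \exp(b) & 0 \end{bmatrix},
\]
the eta contribution $\eta(A) = -\cot(\theta_1/2)\cot(\theta_2/2)$ equals $1$. The plan is to find the two rotation angles $\theta_1, \theta_2$ by which $A$ acts on a pair of orthogonal real $2$-planes, and then evaluate the cotangent product directly. Since $A$ is anti-diagonal with unit-modulus entries, it is a product of a reflection-type swap and a diagonal phase, so I expect its eigenstructure to be forced into a rigid form regardless of $a$ and $b$.

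First I would compute the eigenvalues of $A$. Writing $\lambda$ for an eigenvalue, the characteristic polynomial is $\lambda^2 - \exp(a)\exp(b) = 0$, since the trace vanishes and the determinant is $-\exp(a+b)$ up to sign; carefully, $\det A = -\exp(a)\exp(b) = -\exp(a+b)$ and the trace is $0$, so the eigenvalues satisfy $\lambda^2 = \exp(a+b)$. Thus the two eigenvalues are $\pm \exp\bigl((a+b)/2\bigr)$, a complex number and its negative. This is the crucial structural fact: the two eigenvalues differ by a factor of $-1 = \exp(1/2)$.

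Next I would translate this into the two rotation angles. In the convention of the excerpt, if the complex eigenvalues are $\exp(\theta_1)$ and $\exp(\theta_2)$ (with $\exp(x) = e^{2\pi i x}$), then writing $\exp((a+b)/2) = \exp(\theta_1)$ for one choice, the other eigenvalue $-\exp((a+b)/2) = \exp(\theta_1 + \tfrac12)$ gives $\theta_2 = \theta_1 + \tfrac12$. Hence $\theta_2/2 = \theta_1/2 + \tfrac14$, measured in the same units, and the product becomes
\[
\eta(A) = -\cot(\theta_1/2)\cot(\theta_2/2) = -\cot(\pi\theta_1)\cot\!\left(\pi\theta_1 + \tfrac{\pi}{2}\right),
\]
after converting the normalized angle to radians. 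The final step uses the identity $\cot(\psi + \pi/2) = -\tan(\psi)$, so the product $\cot(\psi)\cdot(-\tan\psi) = -1$, whence $\eta(A) = -(-1) = 1$.

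The main obstacle I anticipate is bookkeeping the angle conventions correctly: the excerpt uses the normalization $\exp(x) = e^{2\pi i x}$, so an eigenvalue phase shift of $-1$ corresponds to shifting the normalized angle by $1/2$ and the \emph{half}-angle appearing in the cotangent by $1/4$, which in ordinary radians is exactly $\pi/2$. Getting this factor right is what makes the two cotangents into complementary-angle cotangents whose product is $-1$. A secondary point to verify is that $A$ being anti-diagonal has no real eigenvalue issue forcing a genuinely different $2$-plane decomposition; but because the two eigenvalues are a conjugate-free pair $\pm\mu$ with $|\mu|=1$, the action on $\C^2 \cong \R^4$ splits into two rotation planes with the angles just computed, so the formula $\eta(A) = -\cot(\theta_1/2)\cot(\theta_2/2)$ applies verbatim. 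Once the angle shift of $\pi/2$ is established, the value $1$ follows immediately and independently of $a$ and $b$.
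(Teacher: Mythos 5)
Your proof is correct and takes essentially the same route as the paper: both identify the eigenvalues as $\exp\bigl(\tfrac{a+b}{2}\bigr)$ and $\exp\bigl(\tfrac{a+b}{2}+\tfrac{1}{2}\bigr)$ (you derive this from the vanishing trace and the determinant, while the paper simply states it) and then conclude from the identity $\cot\bigl(x+\tfrac{\pi}{2}\bigr) = -1/\cot(x)$, equivalently $-\tan(x)$. Your careful handling of the normalization $\exp(x)=e^{2\pi i x}$, turning the eigenvalue shift by $-1$ into a half-angle shift of $\pi/2$, matches the paper's computation exactly.
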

\begin{proof}
This matrix has eigenvalues $\exp(\frac{a + b}{2})$ and $\exp(\frac{a + b}{2} + \frac{1}{2})$. Thus
\[
   \eta(A) = -\cot\left(\frac{\pi(a + b)}{2}\right) \cot\left(\frac{\pi(a + b)}{2} + \frac{\pi}{2}\right).
\]
But the angle-addition formula for cotangent gives $\cot(x + \frac{\pi}{2}) = -1 / \cot(x)$, and the claim follows.
\end{proof}

\subsection{Calculation of eta invariants}

Consider first a 3-dimensional spherical space form $Y$ with fundamental group $\Z_u \times D_{4v}^*$ for some even $v$. According to Wolf \cite{Wolf}, up to equivalence and automorphisms of the group, the only representation of this group into $SO(4)$ which acts freely on the 3-sphere has image in $U(2) < SO(4)$ generated by the matrices
\[
A =
\begin{bmatrix}
\exp(\frac{2v + u}{2uv}) & 0\\
0 & \exp(\frac{2v - u}{2uv})\\
\end{bmatrix}
\]
and
\[
B =
\begin{bmatrix}
0 & 1\\
-1 & 0\\
\end{bmatrix}.
\]
This representation comes from the presentation
\[
   \langle\, A, B \mid A^{2uv} = 1, B^2 = A^{uv}, BAB^{-1} = A^k \,\rangle,
\]
where $k$ is chosen so that $k \equiv -1 \pmod{2v}$ and $k \equiv 1 \pmod{u}$. (This choice is unique by the Chinese remainder theorem).

We wish to compute the quantity $\eta(Y) = \frac{1}{4uv} \sum_{g \in G} \eta(g)$, identifying $G$ with the image of the representation above. Notice that every element of $G$ can be uniquely written as $A^p B^q$ for some $p \in \Z_{2uv}$ and $q \in \Z_2$. If $q$ is 0, then the element is diagonal, and if $q$ is 1, then the element is anti-diagonal. Because of Claim~\ref{C:anti-diagonal}, each anti-diagonal element contributes $1$ to the above sum, and there are $2uv$ of them, so $\eta(Y) = \frac{1}{2} + \frac{1}{4uv} \sum_{i = 0}^{2uv - 1} \eta(A^i)$. But since each $A^i$ is diagonal, this latter sum is equal to
\[
-\frac{1}{4uv} \sum_{i = 1}^{2uv - 1} \cot\left(\frac{\pi i (2v + u)}{2uv}\right) \cot\left(\frac{\pi i (2v - u)}{2uv}\right),
\]
which, as noticed by Hirzebruch \cite{Hirzebruch} has a nice interpretation in terms of so-called Dedekind sums. These are certain finite sums which were first investigated by Dedekind in relation to transformation properties of modular forms. They are defined as follows:

Let $\ded{.}: \R \to \R$ be the ``sawtooth'' function given by:
\[
\ded{x} =
\begin{cases}
x - \lfloor x \rfloor - \frac{1}{2} & x \in \R \setminus \Z\\
0 & x \in \Z
\end{cases},
\]
where $\lfloor x \rfloor$ means the floor of $x$ (the greatest integer $ \leq x$). Then the Dedekind sum $D(a, b; c)$ for pairwise relatively-prime integers $a$, $b$, and $c$ is
\begin{equation} \label{E:dedekind}
D(a, b; c) = \sum_{i = 1}^{c - 1} \bigded{\frac{ai}{c}} \bigded{\frac{bi}{c}}.
\end{equation}

The sums studied by Dedekind were of the form $s(b, c) := D(1, b; c)$. However, whenever $a$ and $c$ are relatively prime integers, we can find an inverse $z$ for $a$ modulo $c$, and we see by the periodicity of the sawtooth function that $D(a, b; c) = D(1, bz; c) = s(bz, c)$, so these 2-argument sums are no less general.

The relationship between these sums and our cotangent sum (see for example \cite{Rademacher}) is that
\[
   D(p, q; r) = \frac{1}{4r} \sum_{k = 1}^{r - 1} \cot\left(\frac{\pi pk}{r}\right) \cot\left(\frac{\pi qk}{r}\right).
\]

Thus the $\eta$ invariant of $Y$ is just given by
\[
   \eta(Y) = \frac{1}{2} - 2 D(2v + u, 2v - u, 2uv).
\]

We will compute this Dedekind sum shortly, but let us first examine the case where the fundamental group of $Y$ is $\Z_u \times D_{4v}^*$ for odd $v$. There is again just one orthogonal representation up to equivalence and automorphisms, given in a complex basis by the generators
\[
A = 
\begin{bmatrix}
\exp(1 / v) & 0\\
0 & \exp(-1 / v)\\
\end{bmatrix}
\]
and
\[
B = 
\begin{bmatrix}
0 & 1\\
\exp(1 / 2u) & 0\\
\end{bmatrix}.
\]

Every element of the group can be written uniquely as $A^sB^t$ with $s \in \Z_v$ and $t \in \Z_{4u}$. It is easy to see that $B^2$ commutes with $A$, and that the set of elements with $t$ even is a cyclic subgroup generated by
\[
AB^2 =
\begin{bmatrix}
\exp\left(\frac{v + 2u}{2uv}\right) & 0\\
0 & \exp\left(\frac{v - 2u}{2uv}\right)\\
\end{bmatrix}.
\]
The elements with $t$ odd are all anti-diagonal, and by the same reasoning as in the even case, we obtain the formula
\[
\eta(Y) = \frac{1}{2} - 2D(v + 2u, v - 2u, 2uv) = \frac{1}{2} + 2D(2u + v, 2u - v, 2uv).
\]

Thus, if we can compute $D(2x + y, 2x - y, 2xy)$ for $2x$ and $y$ relatively prime, then we will have a formula for both cases. In general, there is no closed-form expression for Dedekind sums, but in this instance, it will turn out that we can get an almost-explicit answer. We begin with the Rademacher reciprocity formula \cite{Rademacher2}:
\[
D(a, b; c) + D(b, c; a) + D(c, a; b) = \frac{1}{12} \frac{a^2 + b^2 + c^2}{abc} - \frac{1}{4}
\]
This reduces our problem to that of computing $D(2x - y, 2xy; 2x + y)$ and $D(2xy, 2x + y; 2x - y)$. We will sketch the calculation of the first; the second is similar.

Clearly, $D(a, b; c)$ depends only on the residue class of $a$ and $b$ modulo $c$. Moreover, since we sum over all residue classes modulo $c$, if $d$ is relatively prime to $c$, then $D(ad, bd; c) = D(a, b; c)$. Thus
\begin{align*}
D(2x - y, 2xy; 2x + y) &= D(-2y, 2xy; 2x + y)\\
~ &= D(-1, x; 2x + y)\\
~ &= -D(1, x; 2x + y)\\
~ &= -s(x, 2x + y),
\end{align*}
where we used the obvious identity $D(-a, b; c) = -D(a, b; c)$. To evaluate this last 2-argument sum, we use the following special case of Rademacher reciprocity, which was actually first proved by Dedekind \cite[vol. 1, pp. 159-173]{Dedekind}:
\[
s(b, c) + s(c, b) = \frac{1}{12}\left(\frac{b}{c} + \frac{1}{bc} + \frac{c}{b}\right) - \frac{1}{4}.
\]
This formula allows us to compute $s(2x + y, x)$ instead, but this is just the same as $s(y, x)$. Performing the same steps for the other term $D(2xy, 2x + y; 2x - y)$ and putting everything together, we have, after the dust settles, the following:
\[
D(2x + y, 2x - y, 2xy) = \frac{1}{12xy} + \frac{y}{6x} - \frac{1}{4} - 2s(y, x).
\]

Applying this in the odd case, we get:
\[
\eta(Y) = \frac{1}{2} + 2D(2u + v, 2u - v, 2uv) = \frac{1}{6uv} + \frac{v}{3u} - 4s(v, u),
\]
and in the even case:
\[
\eta(Y) = \frac{1}{2} - 2 D(2v + u, 2v - u, 2vu) = 1 - \frac{1}{6vu} - \frac{u}{3v} + 4s(u, v).
\]

But by applying Dedekind reciprocity to the Dedekind sum in this last expression, we get exactly the same formula as in the odd case! Thus, we have:
\begin{lemma}
If $Y$ is a 3-dimensional spherical space form with fundamental group $\Z_u \times D_{4v}^*$, then
\begin{equation} \label{E:Eta0}
\eta(Y) = \frac{1}{6uv} + \frac{v}{3u} - 4 s(v, u).
\end{equation}
\end{lemma}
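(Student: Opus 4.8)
The plan is to assemble the two case-by-case computations carried out above into the single formula (\ref{E:Eta0}), with the reconciliation of the two cases supplied by Dedekind reciprocity. By Wolf's classification, a space form $Y$ with fundamental group $\Z_u \times D_{4v}^*$ admits, up to equivalence and group automorphisms, exactly one free orthogonal representation, and its shape depends on the parity of $v$; accordingly I would organize the proof around the two representations already written down, reducing the $\eta$-invariant in each to a value of the Dedekind sum $D(2x+y, 2x-y, 2xy)$ and then invoking the closed form
\[
D(2x + y, 2x - y, 2xy) = \frac{1}{12xy} + \frac{y}{6x} - \frac{1}{4} - 2s(y, x)
\]
established above.

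In the odd case there is essentially nothing further to do: substituting $x = u$, $y = v$ into the closed form and into $\eta(Y) = \tfrac{1}{2} + 2D(2u + v, 2u - v, 2uv)$ yields exactly (\ref{E:Eta0}). The even case is where the real step lies. Substituting $x = v$, $y = u$ into the same closed form and into $\eta(Y) = \tfrac{1}{2} - 2D(2v + u, 2v - u, 2vu)$ produces the superficially different expression
\[
\eta(Y) = 1 - \frac{1}{6uv} - \frac{u}{3v} + 4 s(u, v),
\]
and the task is to recognize that this in fact agrees with the odd-case answer.

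To see this I would apply the Dedekind reciprocity law
\[
s(u, v) + s(v, u) = \frac{1}{12}\left(\frac{u}{v} + \frac{1}{uv} + \frac{v}{u}\right) - \frac{1}{4}
\]
to eliminate $s(u, v)$ in favor of $s(v, u)$. The terms $\pm\tfrac{u}{3v}$ cancel, the constant $1$ cancels against the $-\tfrac{1}{4}$ (scaled by $4$), and the $\tfrac{1}{uv}$-contributions combine as $-\tfrac{1}{6uv} + \tfrac{1}{3uv} = \tfrac{1}{6uv}$, so the whole expression collapses to $\tfrac{1}{6uv} + \tfrac{v}{3u} - 4 s(v, u)$, which is precisely (\ref{E:Eta0}). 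Since both parities now give the same formula, the lemma follows.

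The individual manipulations are routine arithmetic; the only conceptual point—and the reason a single clean statement is possible at all—is that the two parity cases arise from genuinely different generators, and hence different Dedekind sums, yet are tied together exactly by reciprocity. The main thing to be careful about is the bookkeeping: keeping straight which of $u, v$ plays the role of $x$ and which of $y$ (equivalently, which of $2x$, $y$ is the even argument) when matching the abstract closed form to the concrete representation in each case, so that the correct instance of reciprocity is invoked.
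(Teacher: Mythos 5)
Your proposal is correct and follows essentially the same route as the paper's own proof: both reduce the two parity cases to the closed form for $D(2x+y, 2x-y, 2xy)$ (with $x=u$, $y=v$ in the odd case and $x=v$, $y=u$ in the even case) and then reconcile the even-case expression $1 - \tfrac{1}{6uv} - \tfrac{u}{3v} + 4s(u,v)$ with the odd-case answer via Dedekind reciprocity. Your arithmetic check of the cancellations is accurate, so there is nothing to add.
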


Now we return to our original context. We were interested in two cases: either $G = D_{4 \cdot 2b}^*$, or $G = \Z_m \times D_{4b}^*$. In the former case, $u = 1$, and the Dedekind sum in the above formula vanishes. In the latter case, we have a space $\tX$ with Euler characteristic $b + 3$ (Table~\ref{Table:Kronheimer}), and we have the group $\Z_m$ acting as a group of covering transformations, so we must have $m \mid b + 3$. This implies that $b \equiv -3 \pmod{m}$, so that the Dedekind sum $s(b, m)$ is equal to $s(-3, m) = -s(3, m)$. Finally, the Dedekind reciprocity formula allows us to write this last sum in terms of either $s(0, 3)$, $s(1, 3)$, or $s(2, 3)$, depending on the residue class of $m$ modulo 3, but these three sums are easy to evaluate by hand. Thus, we determine that
\begin{equation} \label{E:Eta}
\eta(\partial X) = \frac{1}{6mb} + \frac{b}{3m} +
\begin{cases}
\frac{(m - 10)(m + 1)}{9m} & m \equiv 0 \pmod{3}\\
\frac{(m - 10)(m - 1)}{9m} & m \equiv 1 \pmod{3}\\
\frac{(m - 5)(m - 2)}{9m} & m \equiv 2 \pmod{3}\\
\end{cases}.
\end{equation}

Now, in the anti-self-dual case the signature formula (\ref{E:Signature}) reduces to
\[
\tau(X) = -\frac{1}{12\pi^2} \int_X |W_-|^2 \, d\mu - \eta(S^3 / G).
\]

We can compute the first two terms of this equation directly. Let $d$ be the degree of the covering map. We know that $\chi(\tX) = b + 3$, so we must have $\chi(X) = \frac{b + 3}{d}$. By Remark~\ref{R:Cohomology}, this means that $\tau(X) = 1 - \frac{b + 3}{d}$. The Gauss-Bonnet formula for bubbles reduces to
\begin{equation} \label{E:GaussBonnet}
\chi(M) = \frac{1}{8\pi^2} \int |W_-|^2 d\mu + \frac{1}{|\pi_1^\infty|},
\end{equation}
which implies that the integral of $|W_-|^2$ over $\tX$ is equal to $8\pi^2(b + 3 - \frac{1}{4b})$. Thus the integral of the same quantity over $X$ is $\frac{1}{d}$ times this. Putting this all together, we get that the eta invariant of the boundary of $X$ must be given by
\begin{equation} \label{E:Eta2}
\eta(\partial X) = \frac{b}{3d} + \frac{1}{d} + \frac{1}{6bd} - 1.
\end{equation}

If the fundamental group at infinity of $X$ is $\Z_m \times  D_{4b}^*$, then setting Equation~\ref{E:Eta} and Equation~\ref{E:Eta2} with $d = m$ equal, we must have
\[
\frac{1}{m} - 1 =
\begin{cases}
\frac{(m - 10)(m + 1)}{9m} & m \equiv 0 \pmod{3}\\
\frac{(m - 10)(m - 1)}{9m} & m \equiv 1 \pmod{3}\\
\frac{(m - 5)(m - 2)}{9m} & m \equiv 2 \pmod{3}\\
\end{cases}.
\]
It is easily verified that the only positive integer solution of these equations is $m = 1$, so there are no non-trivial quotients in this case. Similarly, in the case with $\pi_1^\infty(X) = D_{4 \cdot 2b}^*$, we must have, setting equal Equation~\ref{E:Eta0} with $v = 2b$ and $u = 1$  (so that the Dedekind sum is 0), and Equation~\ref{E:Eta2} with $d = 2$:
\[
\frac{1}{12b} + \frac{2b}{3} = \frac{1}{12b} + \frac{b}{6},
\]
which clearly has no positive-integer solutions. Thus, we finally conclude that there are no non-trivial quotients in the binary dihedral case.

\section{Cyclic case}

We will work from the other direction in this case: starting with the universal covering and determining all isometric quotients.

\subsection{Gibbons-Hawking ansatz}

Let $(M, g, \pi)$ be a Gibbons-Hawking space with $k$ monopoles, i.e., given by the following ansatz:
\begin{theorem}[Gibbons-Hawking \cite{Gibbons-Hawking}]
Let $\pi: M_0 \to \R^3 - \{p_1, \ldots, p_k\}$ be the principal $S^1$-bundle whose first Chern class yields $-1$ when paired with the generators of second homology given by small 2-spheres about the ``monopole'' points $\{p_1, \ldots, p_k\}$. Let $V$ be the function on $\R^3$ given by
\[
	V(x) = \frac{1}{2} \sum_{i = 1}^k \frac{1}{|x - p_i|}.
\]
If we equip the bundle $M_0$ with the connection 1-form $\omega$ defined by $d\omega = \pi^*(*dV)$, then the metric
\[
g = V(dx_1^2 + dx_2^2 + dx_3^2) + V^{-1} \omega^2
\]
on $M_0$ can be smoothly completed, yielding a hyperk\"ahler manifold $M$.
\end{theorem}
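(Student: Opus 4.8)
The plan is to construct the hyperk\"ahler structure in three stages: produce the connection form $\omega$, verify that $g$ is hyperk\"ahler on the open dense set $M_0$ where the circle bundle is honest, and then check that the metric extends smoothly across the points lying over the monopoles.

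\textbf{Existence of $\omega$.} Each function $1/|x-p_i|$ is harmonic away from $p_i$, so $V$ is harmonic on the base and $d(*dV) = (\Delta V)\,dx_1\wedge dx_2\wedge dx_3 = 0$; thus $*dV$ is a closed $2$-form. I would first compute its flux through a small sphere $S_i$ about $p_i$. Since $\Delta(1/|x-p_i|) = -4\pi\,\delta_{p_i}$ and $V$ carries the factor $\tfrac12$, the divergence theorem gives $\frac{1}{2\pi}\int_{S_i}*dV = -1$, exactly the value taken by the prescribed first Chern class on $S_i$. As $\R^3\setminus\{p_1,\dots,p_k\}$ is homotopy equivalent to a wedge of $k$ two-spheres, the classes $[S_i]$ generate $H_2$, so $*dV$ and the curvature of any chosen connection on $M_0$ represent the same cohomology class. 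They therefore differ by an exact form, and subtracting its pullback from the chosen connection yields an $\omega$ with $d\omega = \pi^*(*dV)$.

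\textbf{Hyperk\"ahler structure on $M_0$.} On $M_0$ I would introduce the orthonormal coframe $e^0 = V^{-1/2}\omega$, $e^i = V^{1/2}\,dx_i$, so that $g = (e^0)^2 + (e^1)^2 + (e^2)^2 + (e^3)^2$, and form the three anti-self-dual $2$-forms $\omega_1 = e^0\wedge e^1 - e^2\wedge e^3 = \omega\wedge dx_1 - V\,dx_2\wedge dx_3$ together with its two cyclic permutations. These constitute a constant-norm frame for the bundle $\Lambda^2_-$ of anti-self-dual forms (consistent with the eventual metric being anti-self-dual). The content of the ansatz is that $d\omega = \pi^*(*dV)$ is exactly the relation forcing each $\omega_a$ to be closed: in $d\omega_1 = d\omega\wedge dx_1 - dV\wedge dx_2\wedge dx_3$, substituting $d\omega = *dV$ makes the two terms cancel, and likewise for the cyclic permutations. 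Having produced a closed, constant-norm frame of $\Lambda^2_-$, I would invoke the standard four-dimensional criterion that such a frame forces the restricted holonomy into $Sp(1)\cong SU(2)$ (equivalently, the three associated almost complex structures are parallel and satisfy the quaternion relations); hence $(M_0,g)$ is hyperk\"ahler.

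\textbf{Smooth completion.} The remaining and most delicate point --- and the one where the hypothesis on the Chern class is essential --- is that adjoining a single point over each $p_i$ produces a smooth manifold to which $g$ extends smoothly. I would argue locally near a fixed $p_i$, writing $V = \tfrac12|x-p_i|^{-1} + h$ with $h$ harmonic and smooth at $p_i$; it then suffices to analyze the single-monopole model $V = \tfrac{1}{2r}$, $r = |x-p_i|$. Here $g = \tfrac{1}{2r}\,dr^2 + \tfrac{r}{2}\,d\Omega^2 + 2r\,\omega^2$, and the substitution $r = \tfrac12\rho^2$ brings this to $d\rho^2 + \tfrac14\rho^2\,d\Omega^2 + \rho^2\,\omega^2$; this is the flat metric on a punctured ball in $\R^4$, with the $\rho = \mathrm{const}$ slices round $3$-spheres and $\rho = 0$ filling in the collapsed fiber, provided the fiber circle is given the period dictated by the Euler number $-1$. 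The role of the hypothesis appears precisely here: Euler number $-1$ makes the unit-fiber circle bundle over the small $S^2$ the Hopf bundle $S^3$, so the link is the round $3$-sphere and the fill-in introduces no conical or orbifold point (Euler number $-n$ would instead give the lens space $S^3/\Z_n$ and an $A_{n-1}$ singularity). Finally, since $h$ and the corresponding correction to the connection are smooth at $p_i$, the full metric is a smooth perturbation of this flat model and so extends smoothly across the added point. Combined with the hyperk\"ahler property already established on the dense set $M_0$, this gives the smooth hyperk\"ahler completion $M$. I expect this local smoothness analysis to be the main obstacle, since it is where the precise normalization of the Chern class and of the fiber period must be matched in order to rule out conical singularities.
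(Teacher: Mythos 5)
There is nothing in the paper to compare your argument against: the statement is quoted as Gibbons--Hawking's theorem, with a citation to their original paper, and is used in this paper purely as a black box. Judged on its own, your proposal is the standard proof and is correct in outline. The flux computation $\frac{1}{2\pi}\int_{S_i} *dV = -1$ (coming from the factor $\tfrac12$ in $V$) is right and does produce the connection $\omega$; the cancellation $d(\omega\wedge dx_1 - V\,dx_2\wedge dx_3) = \pi^*(*dV)\wedge dx_1 - dV\wedge dx_2\wedge dx_3 = 0$ is right; and the criterion you invoke --- a global frame of closed $2$-forms satisfying the pointwise quaternionic relations is automatically parallel, hence forces holonomy into $Sp(1)$ (Hitchin's lemma) --- is exactly the standard tool for this step. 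Your identification of the one-monopole model $V = \tfrac{1}{2r}$ with flat $\R^4$ under $r = \tfrac12\rho^2$, and of the Chern class $-1$ as precisely what makes the link the Hopf $S^3$ rather than a lens space $S^3/\Z_n$, is also correct and is indeed the point where the normalization of the ansatz enters.

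Two caveats, neither fatal. First, your closing assertion that ``the full metric is a smooth perturbation of this flat model and so extends smoothly'' is the one place where a claim substitutes for an argument. What must be checked is that each correction term is a smooth tensor in the flat-model coordinates near the added point. This does work, for a reason worth recording explicitly: the projection extends to a smooth map from a $\C^2$-neighborhood of the added point to $\R^3$ (it is quadratic in the flat coordinates), so $h\circ\pi$, the pullbacks $\pi^* dx_i$, and the correction $\pi^*\alpha$ to the connection are all smooth upstairs; and the singular model connection form, which is $\xi^\flat/\rho^2$ with $\xi$ the generator of the circle action, only ever appears multiplied by $V^{-1}\sim \rho^2$ or contracted against smooth forms, leaving smooth tensors such as $h\,(\xi^\flat)^2$ and $\xi^\flat\odot\pi^*\alpha$. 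Second, a matter of convention only: whether your triple spans $\Lambda^2_-$ or $\Lambda^2_+$ depends on the orientation you assign; in the conventions used elsewhere in this paper (where hyperk\"ahler metrics have $W_+ = 0$ and the parallel K\"ahler forms are \emph{self}-dual), your parenthetical remark would have the opposite sign. This affects nothing in the argument, but you should fix one convention and state it.
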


The difference $M \setminus M_0$ consists of $k$ points $\{\tilde{p}_1, \ldots, \tilde{p}_k\}$, and the map $\pi$ on $M_0$ extends to a map $\pi: M \to \R^3$ such that $\pi(\tilde{p}_i) = p_i$. To see the $S^2$ of parallel complex structures, first let $\partial_\theta$ denote the vertical vector field on $M_0$ defined by $\omega(\partial_\theta) = 1$. Then the vector fields $V^{1/2}\partial_\theta$ and $V^{-1/2}\partial_{x_i}$ (actually the horizontal lifts) give an orthonormal trivialization of the tangent bundle of $M_0$. Thus an orthogonal almost-complex structure $I$ on $M_0$ which is compatible with the orientation is uniquely determined by where it sends $V^{1/2}\partial_\theta$, which must be a unit vector in $\Span\{V^{-1/2}\partial_{x_i}\} \cong \R^3$. One needs only check that these almost-complex structure all extend to $M$, and that the result is integrable.

Notice also that the $S^1$ action coming from the principal bundle structure on $M_0$ extends to an action on $M$ by leaving $M \setminus M_0$ fixed pointwise, so that $\partial_\theta$ extends by zero to $M$. One can check from the above description that this $S^1$ action is actually triholomorphic. It also follows from this description that $\pi$ is both the (ordinary) quotient map and the hyperk\"ahler moment map for this $S^1$ action (see \cite{Hitchin}).

We will determine the possible isometries of $M$ by looking at holomorphic curves in $M$:

\begin{prop} \label{P:Curves}
Every real surface $C \subset M$ which is a $(-2)$-curve with respect to some parallel complex structure is the preimage under $\pi$ of a straight line segment between monopoles.
\end{prop}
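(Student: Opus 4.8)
The plan is to exploit the Gibbons--Hawking description of $M$ as an $S^1$-bundle over $\R^3$ together with the explicit formula for the parallel complex structures, and to characterize $(-2)$-curves by a combination of topological and volume-minimizing considerations. First I would recall that a $(-2)$-curve is, by definition, a holomorphic $\mathbb{P}^1$ of self-intersection $-2$; in particular it is a compact, hence closed, holomorphic curve with respect to some parallel complex structure $I$. Since $\pi: M \to \R^3$ is the hyperk\"ahler moment map for the triholomorphic $S^1$-action, for each complex structure $I$ two of the three moment-map components assemble into the holomorphic moment map for the $S^1$-action; I would argue that a compact $I$-holomorphic curve must lie in a level set of this holomorphic moment map, which forces $\pi(C)$ to lie in a plane (indeed a line) in $\R^3$ whose direction is determined by $I$. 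Concretely, writing $I$ so that it sends $V^{1/2}\partial_\theta$ to a fixed unit vector $\mathbf{n} \in \Span\{V^{-1/2}\partial_{x_i}\} \cong \R^3$, the two real moment-map components pairing nontrivially with $\mathbf{n}$ are constant along $C$, so $\pi(C)$ is contained in the line through $\R^3$ parallel to $\mathbf{n}$.

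Next I would pin down \emph{which} line. Away from the monopole points the fiber $\pi^{-1}(\ell)$ over a line $\ell$ parallel to $\mathbf{n}$ is, by the ansatz metric $g = V(dx_1^2+dx_2^2+dx_3^2) + V^{-1}\omega^2$, an $S^1$-bundle over $\ell$; the preimage of a segment between two monopoles $p_i, p_j$ that are collinear in the direction $\mathbf{n}$ is compact precisely because the circle fiber collapses to a point at each $\tilde p_i, \tilde p_j$ (this is exactly the smooth completion $M \setminus M_0 = \{\tilde p_1, \dots, \tilde p_k\}$). I would verify that such a preimage $\pi^{-1}(\overline{p_i p_j})$ is a smoothly embedded 2-sphere: over the open segment it is an $S^1 \times (\text{interval})$, and the two endpoints cap it off, giving a topological $S^2$. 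To see it is $I$-holomorphic for the $I$ corresponding to $\mathbf{n}$, I would check that the tangent space is $I$-invariant using the orthonormal frame $V^{1/2}\partial_\theta$, $V^{-1/2}\partial_{x_i}$: along the segment the tangent plane is spanned by the horizontal lift of $\partial$ in the $\mathbf{n}$-direction together with $\partial_\theta$, and by construction $I$ sends the one to (a multiple of) the other.

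Finally I would compute the self-intersection to confirm it equals $-2$, and conversely argue that \emph{every} $(-2)$-curve arises this way. For the converse, given an arbitrary $(-2)$-curve $C$, the moment-map argument already places $\pi(C)$ on a line $\ell$; compactness of $C$ forces $\pi(C)$ to be a bounded segment, and the only way the circle bundle over $\ell$ can close up into a compact curve is for the endpoints of the segment to be monopole points where the fiber degenerates, so $\pi(C) = \overline{p_i p_j}$ and $C = \pi^{-1}(\overline{p_i p_j})$. The self-intersection computation is standard for these fibered spheres (it reflects the local $A_1$-type resolution picture, where two adjacent collinear monopoles produce a single $(-2)$-curve), and I would reconcile the count with $H_2(M,\Z)$ being generated by the classes of segments between consecutive monopoles.

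The main obstacle I anticipate is the converse direction, specifically ruling out the possibility that a compact $I$-holomorphic curve projects to something more complicated than a single segment between two monopoles --- for instance a broken path through several collinear monopoles, or a curve lying over a line containing no monopoles at all. The first possibility I would exclude by irreducibility of the $(-2)$-curve (a chain of segments would be reducible, decomposing as a sum of the individual segment spheres), and the second by observing that over a monopole-free line the $S^1$-bundle $\pi^{-1}(\ell)$ is a trivial cylinder $S^1 \times \R$ admitting no compact holomorphic curve, since the moment map restricted there is proper and nonconstant. Making the ``lies in a level set of the holomorphic moment map'' step fully rigorous --- i.e.\ that a compact holomorphic curve is killed by the holomorphic moment map rather than merely constrained by it --- is the technical heart, and I would handle it by noting that the holomorphic moment map is a holomorphic function on $M$ whose restriction to the compact curve $C$ must be constant by the maximum principle.
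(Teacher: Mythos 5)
Your proposal is correct, but its engine is genuinely different from the paper's. The paper first establishes that $C$ is $S^1$-invariant: since $\partial_\theta$ is triholomorphic, its flow carries $C$ through a family of $(-2)$-curves homologous to $C$, and since two distinct irreducible holomorphic curves always intersect non-negatively, the negativity of $[C]\cdot[C]$ forces each translate to coincide with $C$. Only then does the paper extract the line $\ell$ (by pushing forward the tangent direction $I(\partial_\theta)$) and conclude $C \subset \pi^{-1}(\ell)$ via the identity theorem. You bypass all of this: the complex moment map --- the combination of the two components of $\pi$ orthogonal to the direction $\mathbf{n} \in S^2$ determined by $I$ --- is $I$-holomorphic on $M$, hence constant on the compact curve $C$ by the maximum principle, so $\pi(C)$ lands on a line parallel to $\mathbf{n}$ in one stroke. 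Both arguments then finish identically, from the component structure of $\pi^{-1}(\ell)$ (a chain of spheres over segments between consecutive monopoles plus two non-compact discs, or a cylinder when $\ell$ misses the monopoles). Your route buys elementarity and generality: the maximum principle replaces the rigidity argument, which needs $[C]^2 < 0$, so your proof in fact shows that \emph{every} compact irreducible $I$-holomorphic curve in $M$ is the preimage of a segment between consecutive monopoles, and the $S^1$-invariance of $C$ comes out as a corollary rather than being the first step. The cost is the assertion, standard but not verified in your sketch, that the complex moment map is $I$-holomorphic; this is a two-line computation from $IJ = K$ and skew-adjointness of the complex structures, and it should be included, since you correctly identify it as the technical heart. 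Two small repairs: it is the moment-map components pairing \emph{trivially} with $\mathbf{n}$ (i.e., orthogonal to it) that assemble into the holomorphic moment map and are constant on $C$, not ``nontrivially'' as written; and for a monopole-free line, rather than appealing to properness of the moment map, it is cleaner to note that $C$ would be open (by equality of complex dimensions) and closed in the connected non-compact curve $\pi^{-1}(\ell)$, hence equal to it, contradicting compactness --- the same open-closed observation is what pins $C$ to a single sphere of the chain in the general case, making your irreducibility point precise.
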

\begin{proof}
Equip $M$ with the parallel complex structure $I$ that makes $C$ into a $(-2)$-curve. Since $\partial_\theta$ is triholomorphic, the translations of $C$ along that field are also $(-2)$-curves, and they are homologous to the original, so they must actually coincide. Thus, $C$ is the union of fibers of $\pi$. For $p \in \R^3$ such that $C$ contains $\pi^{-1}(p)$, we see that $I(\partial_\theta)_x$ is tangent to $C$ for each $x \in \pi^{-1}(p)$. But by construction of $M$, $\pi_*(I(\partial_\theta)_x)$ is independent of $x \in \pi^{-1}(p)$. Let $l$ be the line in $\R^3$ containing $p$ and tangent to $\pi_*(I(\partial_\theta)_x)$. Then $\pi^{-1}(l)$ is a non-compact holomorphic curve with respect to $I$, and its intersection with $C$ contains a circle, so it must be all of $C$. Thus $C \subset \pi^{-1}(l)$. If $\{p_{i_1}, \ldots, p_{i_m}\}$ are the monopoles lying on $l$, then $\pi^{-1}(l)$ is a chain of $m - 1$ spheres intersecting transversely, together with two discs intersecting the end spheres transversely. It follows that $C$ must be the preimage of a segment between monopoles.
\end{proof}

\begin{corollary} \label{C:Triholomorphic}
If $k > 2$, then the identity component of the group of triholomorphic isometries of $M$ is isomorphic to $U(1)$.
\end{corollary}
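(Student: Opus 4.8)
The plan is to argue at the level of Lie algebras. Write $G$ for the identity component of the group of triholomorphic isometries of $M$; by the theorem of Myers and Steenrod this is a Lie group, and I identify its Lie algebra $\mathfrak{g}$ with the space of triholomorphic Killing fields on $M$ (these are automatically complete). The principal $S^1$-action already gives an inclusion $U(1) \subseteq G$, generated by $\partial_\theta$, and this action is triholomorphic as noted above. Since a connected one-dimensional Lie group containing a circle must equal that circle, it suffices to prove $\dim \mathfrak{g} = 1$. Thus the whole problem reduces to bounding the number of independent triholomorphic Killing fields by one.

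The first step is to restrict each field to the end of $M$. As a Gibbons--Hawking space with $k$ monopoles, $M$ is ALE and asymptotic to the flat cone $\R^4 / \Z_k$, with $\pi_1^\infty(M) = \Z_k$. Standard ALE asymptotics for Killing fields assign to each $X \in \mathfrak{g}$ a well-defined leading term at infinity, namely a triholomorphic Killing field of the cone fixing the apex. Writing $\C^2 = \mathbb{H}$ with its complex structures realized as right multiplications, the triholomorphic linear isometries fixing the origin are exactly the left multiplications $SU(2)$, and translations do not descend to the cone; moreover the cone limit of $X$ must be $\Z_k$-invariant. Hence the leading term lies in the centralizer $Z$ of $\Z_k$ inside $\mathfrak{su}(2)$, and I obtain a linear map $\ell : \mathfrak{g} \to Z$.

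The hypothesis $k > 2$ enters precisely at this point, through the computation of $Z$. The group $\Z_k \subset SU(2)$ is generated by $\diag(\zeta, \zeta^{-1})$ with $\zeta = e^{2\pi i / k}$. For $k > 2$ this generator has distinct eigenvalues and is therefore not central, so its centralizer is the diagonal maximal torus and $\dim Z = 1$. (For $k = 1$ or $k = 2$ the generator is $\pm \Id$, which is central, so $Z = \mathfrak{su}(2)$ is three-dimensional; this is exactly why those small cases must be excluded and why the extra symmetry of, e.g., flat $\R^4$ and Eguchi--Hanson is permitted there.)

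It then remains to show that $\ell$ is injective, which forces $\dim \mathfrak{g} \leq \dim Z = 1$ and hence, with $U(1) \subseteq G$, that $G \cong U(1)$. An element of $\ker \ell$ is a triholomorphic Killing field decaying at infinity. Every Killing field satisfies $\nabla^* \nabla X = \mathrm{Ric}(X)$, and since $M$ is Ricci-flat this gives $\nabla^* \nabla X = 0$; integrating $\langle \nabla^* \nabla X, X \rangle$ over $M$ and discarding the boundary term at infinity (which vanishes by the decay of $X$) yields $\nabla X \equiv 0$, so $X$ is parallel, and a parallel field that decays at infinity is identically zero. I expect the main obstacle to be making the second step fully rigorous: namely, establishing that a triholomorphic Killing field on the ALE end has a genuine cone limit in the centralizer $Z$, which is where the asymptotic analysis of ALE geometry is needed; the dimension count and the Bochner vanishing are then formal. (An alternative route, closer in spirit to Proposition~\ref{P:Curves}, is to note that $G$ fixes each of the finitely many $(-2)$-curves $\pi^{-1}(\text{segment})$ setwise, hence fixes each singular fiber $\tilde p_i$, and to study the isotropy representation at such a point; but ruling out a full $SU(2)$ there again forces one back to the role of $k>2$, so the asymptotic argument is the cleaner main line.)
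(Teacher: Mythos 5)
Your strategy is genuinely different from the paper's, and it could in principle be made to work, but as written it has a real gap at its crux --- one you partly flag yourself. Everything funnels through the map $\ell$ assigning to a triholomorphic Killing field a ``leading term'' which is a Killing field of the flat cone $\R^4/\Z_k$: the existence, linearity, and codomain of $\ell$ are not formalities but a substantive theorem about ALE asymptotics (one must show a Killing field grows at most linearly, admits an expansion whose top term is a Killing field of the cone commuting with the $\Z_k$-action, and that triholomorphy passes to that term). This can be extracted from the uniqueness of ALE coordinate systems up to Euclidean motions, or from weighted elliptic theory applied to $\nabla^*\nabla X = 0$, but you supply neither, and that is where all the content of your proof lives. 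Worse, your claim that ``the Bochner vanishing is then formal'' is not accurate: for $X \in \ker \ell$, mere decay does not kill the boundary term, which on the sphere of radius $r$ is bounded by $C\, r^3 \sup |X|\,|\nabla X|$; knowing only $|X| = o(1)$ and $|\nabla X| = o(r^{-1})$ gives $o(r^2)$, which is useless. What rescues injectivity is the same expansion machinery you black-boxed: since constant fields do not descend to the cone for $k \geq 2$ and there are no homogeneous harmonics of degree $-1$ in dimension $4$, vanishing of the leading term forces $|X| = O(r^{-2})$ and $|\nabla X| = O(r^{-3})$, after which the boundary term is $O(r^{-2})$ and the argument closes. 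So both halves of your proposal secretly rest on the unproven asymptotic expansion.

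For contrast, the paper's proof (which you gesture at in your closing parenthesis) avoids analysis entirely: for $k > 2$ it chooses three monopoles $p, q, r$, observes via Proposition~\ref{P:Curves} that the preimages of the segments $pq$ and $qr$ are two holomorphic spheres meeting transversely at the single point $\pi^{-1}(q)$, notes that the identity component of the triholomorphic isometry group must fix that point and preserve each of the two tangent lines there, and then uses the faithful isotropy representation to embed the group in $\{\, \diag(a, a^{-1}) \mid a \in U(1) \,\} \cong U(1)$. There the role of $k > 2$ is simply to supply the second curve; soft geometry replaces both your centralizer computation and the ALE expansion. If you wish to keep your route, you must either prove the Killing-field asymptotics or cite a precise reference for them, and you must state the quantitative decay needed in the Bochner step rather than ``decay at infinity.''
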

\begin{proof}
Take 3 distinct monopoles $p, q, r$ and assume that the line segments $pq$ and $qr$ intersect only at $q$ (by reordering). The preimage of these two segments will be two spheres intersecting transversely.

Now, any isometry of $M$ takes a parallel complex structure to another one, so it must permute the union of all real surfaces which are $(-2)$-curves for some parallel complex structure. Thus the identity component of the triholomorphic isometry group acts by homeomorphisms isotopic to the identity on the union of these curves. In particular, every element must fix the point $\pi^{-1}(q)$. Since isometries are determined by their value and differential at any single point, we have a faithful representation of the identity component of the triholomorphic isometry group into $GL(T_{\pi^{-1}q}M)$. Identify this tangent space with $\C^2$ in a way that is compatible with the parallel complex structure making the preimage of the segment between $p$ and $q$ into a $(-2)$-curve. Then the image of the representation must lie in $SU(2)$. Moreover, it must take the tangent spaces of the two intersecting spheres into themselves. Thus the image must actually lie in $\{\, \diag(a, a^{-1}) \mid a \in U(1)\,\} \cong U(1)$.
\end{proof}

\subsection{Isometries}

Let $G$ be a finite group of isometries of $M$ acting freely, and let $f$ be an element of $G$. Since $\tau(M) \neq 0$, we may assume that every element of $G$ acts in an orientation-preserving way.

We first consider the case $k > 2$. First of all, by Corollary~\ref{C:Triholomorphic}, the pushforward of $\partial_\theta$ must be a multiple of itself: $f_* \partial_\theta = a \partial_\theta$ for some $a \in \R^\times$. But since $f$ has finite order, $a = \pm 1$. Notice that since $\partial_\theta$ is exactly tangent to the fibers of $\pi$, $f$ must take fibers of $\pi$ to fibers, and so we get an induced map $\hat{f}: \R^3 \to \R^3$ on the quotient space. If $a = 1$, then $f$ preserves the orientation of the circle fibers, and if $a = -1$, then it reverses them.

If instead $k = 2$ (which is the Eguchi-Hanson metric \cite{Eguchi-Hanson}), then the triholomorphic isometry group is $SO(3)$ (see \cite{Gibbons-Hawking2}), so this same argument doesn't work. From Proposition~\ref{P:Curves}, it follows that $G$ must preserve the 2-sphere $S$ given by the preimage of the segment between the two monopoles and act freely on it, so $G$ must be isomorphic to $\Z_2$. Let $A \in GL(3; \R)$ be the action of the non-identity element on the space of triholomorphic Killing fields $\mathfrak{so}(3) \cong \R^3$. Then $A$ must have a nonzero eigenvector, but $A^2 = \Id$ implies that the only possible eigenvalues are $\pm 1$. This implies that $G$ must send \emph{some} triholomorphic Killing field to itself or its negative. However, consider the explicit form
\begin{equation*}
g = \left[1 - \left(\frac{a}{4}\right)^4\right]dr^2 + r^2\left[1 - \left(\frac{a}{4}\right)^4\right](d\psi + \cos\theta d\phi)^2 + \frac{r^2}{4}(d\theta^2 + \sin^2\theta d\phi^2)
\end{equation*}
for the metric, where $\theta$, $\phi$, and $\psi$ are Euler angles on $SO(3)$. The triholomorphic $SO(3)$ in this picture is exactly the standard $SO(3)$ acting on the round 2-sphere $r = a$, which corresponds to $S$ in the previous picture. It then follows that the triholomorphic isometry group acts transitively on the unit sphere of the triholomorphic Killing fields, so by conjugating $G$ by some triholomorphic isometry, we may assume that $G$ sends $\partial_\theta$ to itself or its negative. Thus, we are in exactly the same situation as before: $f_* \partial_\theta = \pm \partial_\theta$, and there is an induced map $\hat{f}: \R^3 \to \R^3$.

Since $\pi$ is also the hyperk\"ahler moment map for the $S^1$ action, there are parallel complex structures $I$, $J$, and $K$ with K\"ahler forms $\omega_I$, $\omega_J$, and $\omega_K$ such that the 1-forms defined by $dx = \omega_I(\partial_\theta, \cdot)$, $dy = \omega_J(\partial_\theta, \cdot)$, $dy = \omega_K(\partial_\theta, \cdot)$ are actually the differentials of the component functions of $\pi: M \to \R^3$.

Now, $f$ acts on the space of parallel self-dual 2-forms by pullback, so in the basis for this space given by $\omega_I$, $\omega_J$, and $\omega_K$, we can think of this action as being an element $\rho(f)$ of $SO(3)$. Let $d\vec{x}$ be the $\R^3$-valued 1-form on $M$ with components $(dx, dy, dz)$, and $\vec{\omega}$ the $\R^3$-valued 2-form with components $(\omega_I, \omega_J, \omega_K)$. We compute:
\begin{eqnarray*}
(f^* d\vec{x})(v) &=& f^*(\vec{\omega}(\partial_\theta, \cdot))(v)\\
~&=& \vec{\omega}(\partial_\theta, f_* v)\\
~&=&\pm (f^* \vec{\omega})(\partial_\theta, v)\\
~&=& \pm (\rho(f) \cdot \vec{\omega}) (\partial_\theta, v)\\
~&=& \pm \rho(f) \cdot d\vec{x} (v),
\end{eqnarray*}
where the sign in the same as in $f_* \partial_\theta = \pm \partial_\theta$.

Thus, if we go back to the induced map $\hat{f}: \R^3 \to \R^3$, we see that it has constant differential, given by $\pm \rho(f)$. Thus $\hat{f}$ is an affine transformation. However, since $f$ has finite order and $d\hat{f} \in O(3)$, this implies that $\hat{f}$ is conjugate in the Euclidean group to an element of $O(3)$: namely, it is given by a rotation around some point $o \in \R^3$.

Now if any of the fixed points of $\hat{f}$ were were monopoles, then since the preimage of each monopole under $\pi$ is only one point, $f$ would have a fixed point on $M$, so this is impossible. Thus $o$ is not a monopole point. Consider the action of $f$ on the circle fiber over that point. If $f_* \partial_\theta = - \partial_\theta$, then $f$ reverses the orientation of the circle. But every orientation-reversing homeomorphism of $S^1$ has a fixed point, so this is also impossible. Therefore, we can assume that $\partial_\theta$ is invariant under $f$.

Repeating the steps above for each element of $G$, we get an action of $G$ on $\R^3$ by affine transformations, and since $G$ is finite, the image of $G$ in the Euclidean group must be conjugate to a subgroup of $SO(3)$ (take the center of mass of some orbit to find an appropriate origin). By modifying $\pi$ by a translation, we will assume that the image of $G$ actually \emph{is} a subgroup of $SO(3)$. Notice that since each element of $G$ takes fibers of $\pi$ to fibers, and monopoles have 1-point preimages, the image of $G$ in $SO(3)$ must permute the monopoles. Moreover, since $G$ acts freely on $M$, the image in $SO(3)$ must act freely on the monopoles. This proves one direction of Theorem~A.

Now suppose that we have a finite subgroup $G$ of $SO(3)$ which acts freely on the monopole points (so we're taking the origin to be the center of mass of the monopoles). The lift of any single $g \in G$ to an isometry of $M$ follows exactly as in \cite[Proposition 2.7]{Honda}: such a lift always exists, and is unique up to the global $S^1$ action. In particular, we have an exact sequence
\begin{equation*}
1 \to S^1 \to \Aut(M, G) \to G \to 1,
\end{equation*}
where $\Aut(M, G)$ denotes the isometries of $M$ which descend to an element of $G$ on $\R^3$. But $G$ fixes the origin, which is a non-monopole point. In particular, each element of $\Aut(M, G)$ acts isometrically on the circle which lies over the origin, giving a splitting homomorphism $\Aut(M, G) \to S^1$. Thus $\Aut(M, G) = S^1 \times G$.

Thus, a choice of lifting for $G$ is equivalent to a homomorphism $G \to S^1$. If we want the lifting to act freely, then the map should be injective, since the image of $g \in G$ in $S^1$ represents the action of $g$ on the circle fiber over the origin. Thus $G$ must be cyclic, and the homomorphism is unique up to an automorphism of $G$.

Every cyclic subgroup of $SO(3)$ is a group of rotations about some fixed axis, so the action of $G$ will preserve some constant vector field $w$ on $\R^3$. Since it permutes the monopoles, it will also preserve the function $V$, so $V^{-1/2} w$ will be invariant. We already saw that $G$ preserves $\partial_\theta$ and the orientation, so the action of $G$ must actually preserve the complex structure given by $I(\partial_\theta) = V^{-1/2} w$, and thus the quotient $M / G$ is always K\"ahler.

This concludes the proof of Theorem~A.

\section{Consequences} \label{S:Consequences}

We now use the classification theorem to prove the two corollaries mentioned earlier:

\begin{corollaryb}
If $X$ is a Ricci-flat anti-self-dual ALE 4-manifold which is not flat, then
\[
\int_X |\Rm|^2 d\mu \geq
\begin{cases}
6\pi^2 & \text{if } b_2(X) = 0\\
8\pi^2 \left(b_2(X) + 1 - \frac{1}{b_2(X) + 1} \right) & \text{if } b_2(X) > 0.
\end{cases}
\]
In particular, $\int_X |\Rm|^2 d\mu \geq 6\pi^2$ in all cases.
\end{corollaryb}
\begin{proof}
The only component of the curvature tensor of $X$ which may not vanish is the anti-self-dual Weyl curvature, so in this case the Gauss-Bonnet formula (\ref{E:GaussBonnet}) gives
\begin{equation*}
\int_X |\Rm|^2 d\mu = 8\pi^2\left(\chi(X) - \frac{1}{|\pi_1^\infty(X)|}\right),
\end{equation*}
which is equal to
\begin{equation*}
8\pi^2 \left(b_2(X) + 1 - \frac{1}{|\pi_1^\infty(X)|} \right)
\end{equation*}
by Remark~\ref{R:Cohomology}. Evidently, in order to prove the corollary, it is enough to find the minimum of $|\pi_1^\infty|$ for each fixed $b_2$. From Table~\ref{Table:Kronheimer}, it is clear that the Gibbons-Hawking spaces dominate the other simply-connected bubbles in this respect, for each $b_2 \geq 1$, so we need only compare the Gibbons-Hawking spaces and their quotients. From the same table, we see that a $d$-fold quotient of a Gibbons-Hawking space with $k - 1$ monopoles has $|\pi_1^\infty| = dk$ and $b_2 = \frac{k}{d} - 1$, so $|\pi_1^\infty| = d^2(b_2 + 1)$, which is increasing with $d$. If $b_2 > 0$, then for each $d \geq 1$, there is such a space (set $k = d(b_2 + 1)$), so $|\pi_1^\infty| \geq b_2 + 1$, and the formula follows for that case. However, if $b_2 = 0$, then $d = 1$ implies $k = d(b_2 + 1) = 1$ and the 1-monopole Gibbons-Hawking space is just flat $\R^4$. Thus, the smallest non-flat choice for $d$ is 2, so $|\pi_1^\infty| \geq 4$ and the result follows.
\end{proof}

\begin{corollaryc}
If $X$ is a scalar-flat K\"ahler ALE 4-manifold (of any order $\tau > 0$) and $b_2(X) = 0$, then $X$ is actually Ricci-flat, and is a quotient of a Gibbons-Hawking space $\tX$ by a cyclic group of order $\chi(\tX)$.
\end{corollaryc}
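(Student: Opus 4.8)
The plan is to reduce the statement to Theorem~A by first upgrading ``scalar-flat Kähler'' to ``Ricci-flat'', and then reading off the group and its order from the Euler characteristic.

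First I would record the standard fact that a scalar-flat Kähler surface is anti-self-dual for the orientation induced by the complex structure: the self-dual Weyl curvature of any Kähler surface satisfies $|W_+|^2 = s^2/24$, so $s \equiv 0$ forces $W_+ \equiv 0$. Thus $X$ is automatically an anti-self-dual Kähler ALE manifold. Next I would examine the Ricci form $\rho$. On a Kähler surface $\rho$ is closed and represents $2\pi c_1(X)$, and its self-dual part is proportional to $s\,\omega$; since $s \equiv 0$, the form $\rho$ is anti-self-dual. A closed anti-self-dual $2$-form satisfies $d^*\rho = -*d*\rho = *d\rho = 0$, so $\rho$ is harmonic. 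Finally, the ALE decay of order $\tau > 0$ makes $\rho$ (which is algebraic in the curvature) decay like $r^{-\tau - 2}$, so $\int_X |\rho|^2 < \infty$ and $\rho$ is an $L^2$ harmonic $2$-form.

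The crux is to show $\rho \equiv 0$. Here I would invoke the $L^2$-Hodge theory of ALE manifolds: the space of $L^2$ harmonic $2$-forms is isomorphic to the image of $H^2_c(X) \to H^2(X)$, whose dimension is $b_2(X)$. Since $b_2(X) = 0$, there are no nonzero $L^2$ harmonic $2$-forms, so $\rho = 0$ and $X$ is Ricci-flat Kähler. Concretely this amounts to writing $\rho = d\alpha$ (possible because $[\rho] = 2\pi c_1 \in H^2(X;\R) = 0$) and integrating by parts: using $*\rho = -\rho$ and $d\rho = 0$ one gets $\int_{B_R} |\rho|^2 = \int_{S_R} \alpha \wedge *\rho$, and the boundary term is killed by the decay of $\rho$ together with a suitably decaying primitive $\alpha$. \textbf{This step is the main obstacle:} one must either quote the $L^2$-cohomology identification for ALE spaces or produce a primitive $\alpha$ with good enough decay (via weighted Sobolev/Hodge theory) to annihilate the term at infinity.

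Once $X$ is known to be Ricci-flat, it is a bubble, and $\rho = 0$ also upgrades the order to $4$ by Bando--Kasue--Nakajima, so the framework of Theorem~A applies. If $X$ is flat it is $\R^4$, which is the (trivial) one-monopole Gibbons--Hawking space with $\chi = 1$, and the conclusion holds vacuously. Otherwise $X$ cannot be one of Kronheimer's simply-connected instantons, since those all have $b_2 \geq 1$ (Table~\ref{Table:Kronheimer}); hence $X$ is not simply connected, and Theorem~A presents it as a cyclic isometric quotient $\tX / \Z_d$ of a Gibbons--Hawking space, necessarily Kähler. To pin down $d$, I would use multiplicativity of the Euler characteristic, $\chi(X) = \chi(\tX)/d$, together with $\chi(X) = 1 + b_2(X) = 1$ from Remark~\ref{R:Cohomology}; this forces $d = \chi(\tX)$, exactly as claimed.
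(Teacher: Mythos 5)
Your proposal is correct and follows essentially the same route as the paper: show the Ricci form is an $L^2$ harmonic $2$-form, kill it using the identification of $L^2$ harmonic forms on ALE spaces with the image of compactly supported (relative) cohomology in $H^2(X;\R)$, and then apply Theorem~A together with $\chi(X) = 1 + b_2(X) = 1$ to get a cyclic quotient of order $\chi(\tX)$. The step you flag as the main obstacle is exactly what the paper resolves by citing Hausel's theorem on $L^2$ cohomology of ALE manifolds, so no separate integration-by-parts argument is needed; your added details (deducing $W_+ \equiv 0$ and handling the flat, simply-connected case) are correct refinements that the paper leaves implicit.
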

\begin{proof}
Consider the Ricci form $\rho$ of $X$. Since $X$ is scalar-flat, the Ricci form must be harmonic \cite[2.33]{Besse}, and since $X$ is ALE, it must moreover be square-integrable. However, on an ALE 4-manifold, the space of $L^2$ harmonic forms is isomorphic to the image of $H^2(X, \partial X; \R)$ in $H^2(X; \R)$ \cite[Theorem 1A]{Hausel}. Therefore, if $b_2(X) = 0$, then $\rho$ must be the zero form, and so $X$ is Ricci-flat. The last assertion then follows immediately from the main theorem and Remark~\ref{R:Cohomology}
\end{proof}

\bibliographystyle{plain}
\bibliography{GI}

\begin{thebibliography}{10}

\bibitem{Atiyah}
M.~F. Atiyah, V.~K. Patodi, and I.~M. Singer.
\newblock Spectral asymmetry and {R}iemannian geometry. {I}.
\newblock {\em Math. Proc. Cambridge Philos. Soc.}, 77:43--69, 1975.

\bibitem{BKN}
Shigetoshi Bando, Atsushi Kasue, and Hiraku Nakajima.
\newblock On a construction of coordinates at infinity on manifolds with fast
  curvature decay and maximal volume growth.
\newblock {\em Invent. Math.}, 97(2):313--349, 1989.

\bibitem{Besse}
Arthur~L. Besse.
\newblock {\em Einstein manifolds}.
\newblock Classics in Mathematics. Springer-Verlag, Berlin, 2008.
\newblock Reprint of the 1987 edition.

\bibitem{CLW}
Xiuxiong Chen, Claude Lebrun, and Brian Weber.
\newblock On conformally {K}\"ahler, {E}instein manifolds.
\newblock {\em J. Amer. Math. Soc.}, 21(4):1137--1168, 2008.

\bibitem{Dedekind}
Richard Dedekind.
\newblock {\em Gesammelte mathematische {W}erke. {B}\"ande {I}}.
\newblock Herausgegeben von Robert Fricke, Emmy Noether und \"oystein Ore.
  Chelsea Publishing Co., New York, 1968.

\bibitem{Donnelly}
Harold Donnelly.
\newblock Eta invariants for {$G$}-spaces.
\newblock {\em Indiana Univ. Math. J.}, 27(6):889--918, 1978.

\bibitem{Eguchi-Hanson}
Tohru Eguchi and Andrew~J. Hanson.
\newblock Self-dual solutions to {E}uclidean gravity.
\newblock {\em Ann. Physics}, 120(1):82--106, 1979.

\bibitem{Gibbons-Hawking2}
G.~W. Gibbons and S.~W. Hawking.
\newblock Classification of gravitational instanton symmetries.
\newblock {\em Comm. Math. Phys.}, 66(3):291--310, 1979.

\bibitem{Gibbons-Hawking}
G.W. Gibbons and S.W. Hawking.
\newblock Gravitational multi-instantons.
\newblock {\em Physics Letters B}, 78(4):430 -- 432, 1978.

\bibitem{Hatcher}
Allen Hatcher.
\newblock {\em Algebraic topology}.
\newblock Cambridge University Press, Cambridge, 2002.

\bibitem{Hausel}
Tam{\'a}s Hausel, Eugenie Hunsicker, and Rafe Mazzeo.
\newblock Hodge cohomology of gravitational instantons.
\newblock {\em Duke Math. J.}, 122(3):485--548, 2004.

\bibitem{Hirzebruch}
F.~Hirzebruch.
\newblock The signature theorem: reminiscences and recreation.
\newblock In {\em Prospects in mathematics ({P}roc. {S}ympos., {P}rinceton
  {U}niv., {P}rinceton, {N}.{J}., 1970)}, pages 3--31. Ann. of Math. Studies,
  No. 70. Princeton Univ. Press, Princeton, N.J., 1971.

\bibitem{Polygons}
N.~J. Hitchin.
\newblock Polygons and gravitons.
\newblock {\em Math. Proc. Cambridge Philos. Soc.}, 85(3):465--476, 1979.

\bibitem{Hitchin}
N.~J. Hitchin, A.~Karlhede, U.~Lindstr{\"o}m, and M.~Ro{\v{c}}ek.
\newblock Hyper-{K}\"ahler metrics and supersymmetry.
\newblock {\em Comm. Math. Phys.}, 108(4):535--589, 1987.

\bibitem{Honda}
N.~{Honda} and J.~{Viaclovsky}.
\newblock {Conformal symmetries of self-dual hyperbolic monopole metrics}.
\newblock {\em ArXiv e-prints}, February 2009.

\bibitem{Kronheimer1}
P.~B. Kronheimer.
\newblock The construction of {ALE} spaces as hyper-{K}\"ahler quotients.
\newblock {\em J. Differential Geom.}, 29(3):665--683, 1989.

\bibitem{Kronheimer2}
P.~B. Kronheimer.
\newblock A {T}orelli-type theorem for gravitational instantons.
\newblock {\em J. Differential Geom.}, 29(3):685--697, 1989.

\bibitem{McKay}
John McKay.
\newblock Graphs, singularities, and finite groups.
\newblock In {\em The {S}anta {C}ruz {C}onference on {F}inite {G}roups ({U}niv.
  {C}alifornia, {S}anta {C}ruz, {C}alif., 1979)}, volume~37 of {\em Proc.
  Sympos. Pure Math.}, pages 183--186. Amer. Math. Soc., Providence, R.I.,
  1980.

\bibitem{Nakajima}
Hiraku Nakajima.
\newblock A convergence theorem for {E}instein metrics and the {ALE} spaces.
\newblock In {\em Selected papers on number theory, algebraic geometry, and
  differential geometry}, volume 160 of {\em Amer. Math. Soc. Transl. Ser. 2},
  pages 79--94. Amer. Math. Soc., Providence, RI, 1994.

\bibitem{Neumann}
Walter~D. Neumann and Frank Raymond.
\newblock Seifert manifolds, plumbing, {$\mu $}-invariant and orientation
  reversing maps.
\newblock In {\em Algebraic and geometric topology ({P}roc. {S}ympos., {U}niv.
  {C}alifornia, {S}anta {B}arbara, {C}alif., 1977)}, volume 664 of {\em Lecture
  Notes in Math.}, pages 163--196. Springer, Berlin, 1978.

\bibitem{Rademacher2}
Hans Rademacher.
\newblock Generalization of the reciprocity formula for {D}edekind sums.
\newblock {\em Duke Math. J.}, 21:391--397, 1954.

\bibitem{Rademacher}
Hans Rademacher and Emil Grosswald.
\newblock {\em Dedekind sums}.
\newblock The Mathematical Association of America, Washington, D.C., 1972.
\newblock The Carus Mathematical Monographs, No. 16.

\bibitem{Robinson}
Derek John~Scott Robinson.
\newblock {\em A course in the theory of groups}, volume~80 of {\em Graduate
  Texts in Mathematics}.
\newblock Springer-Verlag, New York, 1982.

\bibitem{Rokhlin}
V.~A. Rohlin.
\newblock New results in the theory of four-dimensional manifolds.
\newblock {\em Doklady Akad. Nauk SSSR (N.S.)}, 84:221--224, 1952.

\bibitem{Saveliev}
Nikolai Saveliev.
\newblock {\em Invariants for homology {$3$}-spheres}, volume 140 of {\em
  Encyclopaedia of Mathematical Sciences}.
\newblock Springer-Verlag, Berlin, 2002.
\newblock Low-Dimensional Topology, I.

\bibitem{Wolf}
Joseph~A. Wolf.
\newblock {\em Spaces of constant curvature}.
\newblock Publish or Perish Inc., Boston, Mass., third edition, 1974.

\end{thebibliography}

\vspace{1in}

\noindent 
{\bf Acknowledgement.} The author would like to thank Claude LeBrun for introducing him to this problem and for his support.

\end{document}